  \def\xi{ξ}%
  \def\delta{δ}%
  \def\varpi{ϖ}%
  \def\zeta{ζ}%
  \def\Omega{Ω}%
  \def\leq{≤}%
  \def\dim{dim}%
  \def\gamma{γ}%
  \def\Gamma_K{Γ}%
  \def\Cref#1{#1}%
  \def\texttt#1{<#1>}%
  \def\Proj{Proj}%
  \let\HyPsd@CatcodeWarning\@gobble
\Crefname{subsection}{Subsection}{Subsections}
\Crefname{question}{Question}{Questions}
\Crefname{subsubsection}{Paragraph}{Paragraphs}
\newcommand{\customref}[2]{\hyperref[#2]{#1}}
\newcommand\iref[2]{\customref{\Cref*{#1}~\ref*{#2}}{#1}}
\newcommand{\Krull}{\textnormal{Krull}}
\newcommand{\Spm}{\textnormal{Spm}}
\newlength{\movedby}
\newcommand{\raisegroup}{\leavevmode\setlength{\abovedisplayskip}{0pt}\vspace{-\baselineskip}}
\newcommand{\biggg}{\bBigg@\thr@@}
\newcommand{\Biggg}{\bBigg@{3.5}}
\newcommand{\vast}{\bBigg@{4}}
\newcommand{\Vast}{\bBigg@{5}}
\renewcommand{\emptyset}{\varnothing}
\newcommand{\N}{\mathbb{N}}
\newcommand{\Z}{\mathbb{Z}}
\newcommand{\Zpos}{\Z_{\geq 0}}
\newcommand{\Zsp}{\Z_{>0}}
\newcommand{\C}{\mathbb{C}}
\newcommand{\lcm}{\mathrm{lcm}}
\renewcommand{\Pr}{\mathbb{P}}
\newcommand{\PC}{\Pr^1(\C)}
\newcommand{\Oo}[1]{O\!\left(#1\right)}
\newcommand{\Os}[1]{O^{\sharp}\!\left(#1\right)}
\newcommand{\Spec}{\mathrm{Spec}}
\newcommand{\Proj}{\mathrm{Proj}}
\newcommand{\Frac}{\mathrm{Frac}}
\newcommand{\Sym}{\mathfrak{S}}
\newcommand{\card}[1]{ \left | #1 \right | }
\newcommand{\gen}[1]{ \left \langle #1 \right \rangle }
\newcommand{\ord}{\mathrm{ord}}
\newcommand{\B}{\mathrm{B}} 
\newcommand{\Comp}{\mathrm{Comp}} 
\newcommand{\id}{\mathrm{id}}
\newcommand{\Sub}{\mathrm{Sub}}
\renewcommand{\HF}{\mathrm{HF}}
\newcommand{\gbar}{\underline{g}}
\newcommand{\eqdef}{\overset{\text{def}}{=}}
\newcommand{\verti}{\, \middle \vert \,}
\newcommand{\Span}{\mathrm{Span}}
\renewcommand{\bar}{\overline}
\renewcommand{\tilde}{\widetilde}
\newcommand{\ab}{\textnormal{ab}}
\newcounter{mycounter}[section]
\theoremstyle{plain}
\newtheorem{theorem}[mycounter]{Theorem}
\newtheorem{corollary}[mycounter]{Corollary}
\newtheorem{proposition}[mycounter]{Proposition}
\newtheorem{lemma}[mycounter]{Lemma}
\theoremstyle{remark}
\newtheorem{remark}[mycounter]{Remark}
\theoremstyle{definition}
\newtheorem{definition}[mycounter]{Definition}
\newtheorem{example}[mycounter]{Example}
\titleformat{\section}[block]{\normalfont\centering\scshape\large}{\thesection.}{1em}{}
\titleformat{\subsection}[block]{\normalfont\large}{\thesubsection.}{1em}{\bf}
\titleformat{\subsubsection}[runin]{\normalfont}{\bf\thesubsubsection.}{0.3em}{\bf}
\patchcmd{\@maketitle}{\LARGE}{\huge}{\typeout{OK 1}}{\typeout{Failed 1}}
\patchcmd{\@maketitle}{\large \lineskip}{\Large \lineskip}{\typeout{OK 2}}{\typeout{Failed 2}}
\title{
  The Geometry of Rings of Components \break of Hurwitz Spaces
}
\author{Béranger Seguin\footnote{Universität Paderborn, Fakultät EIM, Institut für Mathematik, Warburger Str. 100, 33098 Paderborn, Germany. Email: \texttt{bseguin@math.upb.de}.}}
\date{}
\renewenvironment{abstract}{%
\hfill\begin{minipage}{0.95\textwidth}
\rule{\textwidth}{1pt} \textsc{Abstract.}}
{\par\noindent\rule{\textwidth}{1pt}\end{minipage}}
\begin{document}

\maketitle{}

\begin{abstract}
	We consider a variant of the ring of components of Hurwitz spaces introduced by Ellenberg, Venkatesh and Westerland.
	By focusing on Hurwitz spaces classifying covers of the projective line, the resulting ring of components is commutative, which lets us study it from the point of view of algebraic geometry and relate its geometric properties to numerical invariants involved in our previously obtained asymptotic counts.
	Specifically, we describe a stratification of the prime spectrum of the ring of components, and we compute the dimensions and degrees of the strata.
	Using the stratification, we give a complete description of the spectrum in some cases.

	\bigskip

	\textbf{Keywords: } Hurwitz spaces $\cdot$ Prime spectra of monoid rings \\
	\textbf{MSC 2010: } 14A10 $\cdot$ 13A02 $\cdot$ 16S34
\end{abstract}

{
  \hypersetup{linkcolor=black}
  \tableofcontents{}
}
\hfill\rule{0.95\textwidth}{1pt}

\section{Introduction and main results}
\label{sn:intro}

For the whole article, we fix a finite group $G$, a nonempty set $D$ of nontrivial conjugacy classes of~$G$,
a map $\xi : D \to \Zsp$ (attributing a multiplicity to each conjugacy class $\gamma \in D$), and a field $k$ whose characteristic does not divide the order $\card{G}$ of the group $G$.

\subsection{Context}

In \cite{EVW}, Ellenberg, Venkatesh and Westerland introduced the \emph{ring of components of Hurwitz spaces}, a graded algebra whose elements are linear combinations of connected components of Hurwitz spaces parametrizing marked $G$-covers%
\footnote{
	Here, a marked $G$-cover is a finite branched cover (not necessarily connected) with a marked point in an unramified fiber, equipped with an action of $G$ on the cover inducing simply transitive actions of $G$ on each unramified fiber.
}
of the affine line.
The grading of that ring reflects the number of branch points of the covers parametrized by each component, and the multiplicative structure is induced by a geometric ``concatenation'' operation.

The definition of that ring is motivated by the fact that its Hilbert function is tightly related to the asymptotic behavior of the cohomology of Hurwitz spaces, which is in turn related (using the Grothendieck-Lefschetz trace formula and Deligne's bounds on the eigenvalues of Frobenius endomorphisms) to the count of $\mathbb{F}_q$-points of Hurwitz spaces and hence to the distribution of extensions $F|\mathbb{F}_q(T)$ with Galois group isomorphic to~$G$, when $q$ is large and coprime to $\card{G}$.
In \cite{ETW}, this approach was used to obtain an upper bound consistent with the variant of Malle's conjecture for function fields over finite fields.

In \cite{countcomp}, we have extended some of the counting results of \cite{EVW}.
For instance, we have studied the analogous ring of components of Hurwitz spaces of marked $G$-covers of the \emph{projective} line.
This ring is a \emph{commutative} graded finitely generated algebra, and the growth of its Hilbert function is related to geometric invariants of its spectrum.
This observation was the starting point for a more systematic study of the ring of components from the point of view of algebraic geometry.

\subsection{Main results}

In \Cref{sn:def-prelim}, we define the \emph{ring of components} $R$ (\Cref{defn:ringcomp}), which is a finitely generated commutative graded $k$-algebra.
We then introduce its prime spectrum $\Spec\,R$, which we call the \emph{variety of components} (\Cref{defn:varcomp}).
In \Cref{sn:stratification}, we define subsets $\gamma(H)$ of $\Spec\,R$ (\Cref{defn:stratum}), indexed by subgroups $H$ of $G$, and we prove that they form a stratification of the variety of components:

\begin{theorem}
	\label{prop:stratification-of-spec}
	The locally closed subsets $\gamma(H)$ form a stratification of $\Spec\,R$:
	\[
		\Spec\,R
		=
		\bigsqcup_{H \subseteq G}
			\gamma(H).
	\]
\end{theorem}

This result, which is a particular case of the more general \Cref{thm:z-ih-is-made-of-gamma}, has the following consequence: in order to describe the variety of components fully, it suffices to describe each stratum $\gamma(H)$.
Using the counting results of \cite{countcomp}, we compute in \Cref{sn:proof-dim-gamma-eq-zeta} the Krull dimension of the stratum~$\gamma(H)$ corresponding to a subgroup $H$ of $G$.
More precisely, we relate it to a numerical invariant defined in \cite{countcomp}, the \emph{splitting number} $\Omega(D_H)$ (\Cref{defn:splitting-number}):

\begin{theorem}
	\label{thm:dim-gamma-omega}
	We have $\dim_\Krull \gamma(H) = \Omega(D_H) + 1$.
\end{theorem}

In \Cref{subsn:degree-gamma}, we discuss further connections between group-theoretic and geometric invariants by relating the degree of the stratum $\gamma(H)$, seen as embedded in projective space, to (a quotient of) the second homology group of $H$.

In \Cref{sn:spectrum-2}, we approach the variety of components more ``directly'' by describing the strata fully in \Cref{thm:desc-spectrum} and its coordinate-based variant \Cref{thm:desc-spectrum-coords}.
However, our description relies on strong assumptions on the ring of components.
We do not reproduce the statement here as it uses a lot of terminology.
This result applies in particular to the classical situation where $G$ is a symmetric group and $D$ contains only the conjugacy class of transpositions.
In that case, \Cref{thm:varcomp-symgp} gives a full description of the variety of components.

\subsection{Outline}

This article is organized as follows:

\begin{itemize}
	\item
		In \Cref{sn:def-prelim}, we define notation and terminology used throughout the article.
		Notably, we define the ring of components (\Cref{defn:ringcomp}) and its associated variety (\Cref{defn:varcomp}), which are our main objects of study.
	\item
		In \Cref{sn:stratification}, we associate to each subgroup $H$ of $G$ a subring $R^H$ and four ideals $I_H, I^*_H, J_H, J^*_H$ of the ring of components (\Cref{defn:important-subspaces}).
		We use these to define the strata $\gamma(H)$ (\Cref{defn:stratum}).
		We then prove \Cref{thm:z-ih-is-made-of-gamma}, which is the general form of the stratification of the variety of components (\Cref{prop:stratification-of-spec}).
	\item
		In \Cref{sn:ringcomp-nearly-reduced}, we prove \Cref{thm:ringcomp-nearly-reduced}.
		This technical result, which is a weak asymptotic form of reducedness for the ring of components $R$, is needed for the proof of \Cref{thm:dim-gamma-omega}.
	\item
		In \Cref{sn:proof-dim-gamma-eq-zeta}, we compute the Krull dimension of each stratum $\gamma(H)$ (\Cref{thm:dim-gamma-omega}).
		In \Cref{subsn:degree-gamma}, we also compute the degree of $\gamma(H)$ in some cases.
		The proofs rely on the asymptotic counting results from \cite{countcomp}.
	\item
		In \Cref{sn:spectrum-2}, we prove \Cref{thm:desc-spectrum,thm:desc-spectrum-coords}, which give complete descriptions of the variety of components in some cases.
		We apply these results to the classical case of symmetric groups in \Cref{ssn:symgp}.
\end{itemize}

\subsection{Acknowledgments}

This work was funded by the French ministry of research through a CDSN grant, and by the Deutsche Forschungsgemeinschaft (DFG, German Research Foundation) --- Project-ID 491392403 --- TRR 358.

I thank my advisors Pierre Dèbes and Ariane Mézard for their support and their precious advice during my time as a PhD student, and the reporters of my thesis Jean-Marc Couveignes and Craig Westerland for providing helpful feedback.

\section{Definitions and preliminaries}
\label{sn:def-prelim}

Recall that we have fixed a finite group $G$, a set $D$ of nontrivial conjugacy classes of $G$, and a map $\xi : D \to \Zsp$.
Additionally, we define the set $c = \bigsqcup_{\gamma \in D} \gamma$ and the integer  $\card{\xi} = \sum_{\gamma \in D} \xi(\gamma)$.

\subsection{The monoid of components}
\label{subsn:moncomp}

We briefly recall the definition of the \emph{monoid of components} $\Comp_{\PC}(G , \, D , \, \xi)$, which was already defined in \cite[Definition~3.4.4]{SegThese} and \cite[Definition~2.6]{countcomp}.
First, we define the braid group $\B_n$ by its presentation:

\begin{definition}
	\label{defn:braid}
	The \emph{Artin braid group} $\B_n$ on $n$ strands is defined by the following presentation:
	\[
		\B_n
		\eqdef
		\left\langle
			\sigma_1, \,
			\sigma_2, \,
			\ldots, \,
			\sigma_{n-1}
			\verti
			\begin{array}{cccc}
				\sigma_i \sigma_j
				& =
				& \sigma_j \sigma_i
				& \text{if } |i-j| > 1
				\\
				\sigma_i \sigma_{i+1} \sigma_i
				& =
				& \sigma_{i+1} \sigma_i \sigma_{i+1}
				& \text{if } i \in \{1, \ldots, n-2\}
			\end{array}
		\right\rangle.
	\]
\end{definition}

\begin{definition}
	The \emph{Hurwitz action} of $\B_n$ on the set $G^n$ of $n$-tuples of elements of $G$ is the (well-defined) action for which the generator $\sigma_i \in \B_n$ acts on a tuple $\gbar = (g_1, \ldots, g_n) \in G^n$ as follows:
	\[
		\sigma_i .
		(
			g_1, \,
			\ldots, \,
			g_{i-1}, \,
			g_i, \,
			g_{i+1}, \,
			g_{i+2}, \,
			\ldots, \,
			g_n
		)
		=
		(
			g_1, \,
			\ldots, \,
			g_{i-1}, \,
			g_i g_{i+1} g_i^{-1}, \,
			g_i, \,
			g_{i+2}, \,
			\ldots, \,
			g_n
		).
	\]
\end{definition}

\begin{definition}
	\label{defn:main-invariants}
	Let $\gbar = (g_1, \ldots, g_n) \in G^n$ be a tuple of elements of $G$.
	The \emph{group} of $\gbar$ is the subgroup~$\gen{\gbar}$ of $G$ generated by $g_1, \ldots, g_n$, and the \emph{product} of $\gbar$ is the element $\pi\gbar \eqdef g_1 \cdots g_n \in G$.
\end{definition}

Both the group and product of a tuple are invariant under the Hurwitz action, and thus we extend the definition of these invariants and the notations $\gen{m}, \pi m$ when $m$ is an orbit for the Hurwitz action.

\begin{definition}
	\label{defn:moncomp}
	A \emph{component} (of degree $n$) is the orbit, under the Hurwitz action of the braid group~$\B_{n \card{\xi}}$, of a tuple $\gbar = (g_1, \ldots, g_{n\card{\xi}}) \in G^{n\card{\xi}}$ satisfying $\pi\gbar = 1$ and such that exactly $n \cdot \xi(\gamma)$ entries of $\gbar$ belong to each conjugacy class $\gamma \in D$.
	The \emph{monoid of components} $\Comp_{\PC}(G , \, D , \, \xi)$ is the (nonnegatively) graded set whose elements of degree $n$ are the components of degree $n$, equipped with the multiplication induced by the concatenation of tuples:
	\[
		(
			g_1, \,
			\ldots, \,
			g_{n\card{\xi}}
		)
		(
			g'_1, \,
			\ldots, \,
			g'_{n'\card{\xi}}
		)
		=
		(
			g_1, \,
			\ldots, \,
			g_{n\card{\xi}}, \,
			g'_1, \,
			\ldots, \,
			g'_{n\card{\xi}}
		).
	\]
\end{definition}

The monoid of components is well-defined and commutative (\cite[Proposition~3.3.8]{SegThese}, \cite[Proposition~3.3.11]{SegThese}).
Components of degree $n$ are named this way because they correspond bijectively to connected components of the Hurwitz space classifying marked $G$-covers of the projective line branched at $n \cdot \card{\xi}$ points, among which $n \cdot \xi(\gamma)$ have their monodromy elements in each class $\gamma \in D$.
This connection is explained more carefully in \cite[Subsection~3.3.2]{SegThese}.
The identity element of the monoid $\Comp_{\PC}(G , \, D , \, \xi)$ is the orbit of the empty tuple, which corresponds to the connected component containing only the trivial $G$-cover (with no branch points).

\begin{definition}
	\label{defn:nonfact}
	A nontrivial element of $\Comp_{\PC}(G , \, D , \, \xi)$ is a \emph{non-factorizable component}%
	\footnote{
		Non-factorizable components are simply the irreducible elements of the monoid $\Comp_{\PC}(G , \, D , \, \xi)$, but we avoid using the ambiguous term ``irreducible component''.
	}
	if it does not equal any product of two nontrivial components.
\end{definition}

A simple pigeonhole argument (carried out in \cite[Lemma~3.4.17]{SegThese}) shows that there are finitely many non-factorizable components.
Therefore, the monoid of components is a finitely generated commutative graded monoid.

\begin{remark}
	\label{rk:not-same-degree}
	The non-factorizable components do not necessarily all have the same degree; see \cite[Remark~3.4.20]{SegThese} for a counterexample.
\end{remark}

\subsection{The ring of components}
\label{subsn:ringcomp}

\subsubsection{Definition.}

We now define the ring of components:
\begin{definition}
	\label{defn:ringcomp}
	The \emph{ring of components} $R$ is the graded $k$-algebra $k[\Comp_{\PC}(G , \, D , \, \xi)]$ obtained as the monoid ring (over $k$) of the monoid of components.
	The \emph{irrelevant ideal} $\varpi$ is the (maximal) ideal of~$R$ generated by components of positive degree.
\end{definition}

The ring $R$ of \Cref{defn:ringcomp} corresponds to $R_{\PC}(G,\, D,\, \xi)$ in the notation of \cite[Definition~3.4.12]{SegThese}.
The properties mentioned in \Cref{subsn:moncomp} imply that the ring $R$ is a commutative graded $k$-algebra of finite type, generated by the non-factorizable components.

\subsubsection{Variety of components.}

We now define our main object of study, the \emph{variety of components}:

\begin{definition}
	\label{defn:varcomp}
	The \emph{variety of components} is the set $\Spec\,R$ of prime ideals $\mathfrak{p} \subsetneq R$, equipped with the Zariski topology.
	If $I$ is an ideal of $R$, we denote by $V(I)$ the closed subset of $\Spec\,R$ consisting of all prime ideals containing $I$.
\end{definition}

\subsubsection{Affine embedding.}
\label{subsubsn:affine-embed}

Assume that $k$ is algebraically closed, and let $\Spm\,R$ be the subset of $\Spec\,R$ consisting of closed points, i.e., of maximal ideals $\mathfrak{m} \subsetneq R$.
Then, the set $\Spm\,R$ can be identified with the set of morphisms of $k$-algebras from $R$ to $k$ (identifying a maximal ideal~$\mathfrak{m}$ with the projection $R \twoheadrightarrow R/\mathfrak{m} \simeq k$), or equivalently with the set of $k$-points of the scheme~$\Spec\,R$.
Let~$\Sigma$ be the finite set of non-factorizable components.
Then, we can identify $\Spm\,R$ with a classical variety by embedding it in~$k^\Sigma$ as follows: a point $(x_m)_{m \in \Sigma}$ belongs to $\Spm\,R$ if and only if the equality $x_{m_1} \cdots x_{m_u} = x_{m'_1} \cdots x_{m'_v}$ holds whenever the equality $m_1 \cdots m_u = m'_1 \cdots m'_v$ holds in the monoid of components.
Note that, as the monoid of components is commutative and finitely generated, Dickson's lemma implies that it is presented by finitely many equalities of that type.

\begin{remark}
	\label{rk:spec-not-proj}
	The ring $R$ is a graded $k$-algebra, and thus it may be more natural to consider the projective variety that it defines (i.e., the set of homogeneous ideals which are maximal among those properly contained in~$\varpi$) instead of the affine variety $\Spm\,R$.
	However, since non-factorizable components need not all have the same degree (cf. \Cref{rk:not-same-degree}), the space in which the variety naturally embeds is a ``weighted'' projective space, namely the set of orbits of $k^\Sigma \setminus \{0\}$ under the action of~$k^{\times}$ for which a scalar $\lambda \in k^\times$ acts on a point $z$ by multiplying its coordinate $z_m$ (associated to a non-factorizable component $m \in \Sigma$) by $\lambda^{\deg m}$.
	Weighted projective spaces do embed in ordinary projective spaces of higher dimension \autocite[Theorem~3.4.9]{hosgood}, but we mostly work with the affine variety associated to $R$ to avoid dealing with these subtleties.
\end{remark}

\subsection{$D$-generated subgroups}

We briefly recall the notion of \emph{$D$-generated subgroups} from \cite[Definition~1.1]{countcomp}:

\begin{definition}
	\label{defn:dgen}
	A subgroup $H$ of $G$ is $D$-generated if the sets $\gamma \cap H$ for $\gamma \in D$ are all nonempty and collectively generate~$H$.
	We denote by $\Sub_{G,D}$ the set of subgroups of $G$ which are either trivial or $D$-generated.
\end{definition}

The relevance of this definition comes from the following proposition, which is proved in \cite[Proposition~3.2.22]{SegThese}:

\begin{proposition}
	\label{prop:dgen-are-monodromy-groups}
	A subgroup $H$ of $G$ belongs to $\Sub_{G,D}$ if and only if there is a component $m \in \Comp_{\PC}(G , \, D , \, \xi)$ whose group is $H$.
\end{proposition}

If $H$ is a $D$-generated subgroup of $G$, we define its \emph{splitting number} as in \cite[Definition~1.2]{countcomp}:

\begin{definition}
	\label{defn:splitting-number}
	Let $H \in \Sub_{G,D}$.
	Let $D_H \eqdef \left\{ \gamma \cap H \verti \gamma \in D \right\}$, let $c_H \eqdef c \cap H$ (which is also $\bigsqcup_{\gamma' \in D_H} \gamma'$), and denote by $D^*_H$ the set of conjugacy classes of $H$ which are contained in $c_H$.
	The \emph{splitting number} of $H$ is the integer $\Omega(D_H) \eqdef \card{D^*_H} - \card{D_H}$.
\end{definition}

\begin{definition}
	A $D$-generated subgroup $H$ of $G$ is a \emph{non-splitter} if $\Omega(D_H) = 0$, i.e., if $D_H$ consists of conjugacy classes of $H$.
\end{definition}

The splitting number of $H$ plays a central role in the asymptotic count of components with group $H$, cf. \cite[Theorem~1.4]{countcomp}.

\subsection{Chart of notations}
\label{subsn:char-not}

For quick reference, the chart below indicates where the definitions introduced in this section can be found.
A short description is also given.

\medskip
{
	\centering
	\begin{tabularx}{\textwidth}{
		|>{\centering\arraybackslash}p{4cm}
		|>{\centering\arraybackslash}p{3cm}
		|>{\centering\arraybackslash}X|}
		\hline
		\bf Notation &
		\bf Reference &
		\bf Short description
		\\
		\hline
		$G,D,\xi,k$ &
		Top of \Cref{sn:intro} &
		setup
		\\
		$c, \card{\xi}$ &
		Top of \Cref{sn:def-prelim} &
		\\
		$\B_n$ &
		\Cref{defn:braid} &
		Artin braid group
		\\
		$\gen{\gbar}, \pi\gbar$ &
		\Cref{defn:main-invariants} &
		invariants of a tuple (or component)
		\\
		$\Comp_{\PC}(G , \, D , \, \xi)$ &
		\Cref{defn:moncomp} &
		monoid of components
		\\
		$R$ &
		\Cref{defn:ringcomp} &
		ring of components
		\\
		$\Spec\,R, V(I)$ &
		\Cref{defn:varcomp} &
		variety of components and its closed subsets
		\\
		$\Sub_{G,D}$ &
		\Cref{defn:dgen} &
		set of $D$-generated (or trivial) subgroups
		\\
		$\Omega(D_H)$ &
		\Cref{defn:splitting-number} &
		splitting number of $H$
		\\
		\hline
	\end{tabularx}
}

\medskip

We also include a chart of notation introduced in later sections:

\medskip
{
	\centering
	\begin{tabularx}{\textwidth}{
		|>{\centering\arraybackslash}p{4cm}
		|>{\centering\arraybackslash}p{3cm}
		|>{\centering\arraybackslash}X|}
		\hline
		\bf Notation &
		\bf Reference &
		\bf Short description
		\\
		\hline
		$I_H, I^*_H, J_H, J^*_H$ &
		\Cref{defn:important-subspaces} &
		ideals of $R$ associated to a subgroup $H$
		\\
		$R^H$ &
		\Cref{defn:important-subspaces} &
		subring of $R$ associated to a subgroup $H$
		\\
		$\gamma(H)$ &
		\Cref{defn:stratum} &
		stratum associated to a subgroup $H$
		\\
		$\Gamma_H$ &
		\Cref{defn:ideal-gamma} &
		ideal quotient $(\sqrt{I^*_H}:\sqrt{I_H})$
		\\
		$R_{n,H}, N_{n,H}$
		&
		Top of
		\Cref{sn:ringcomp-nearly-reduced}
		&
		space spanned by components of degree $n$ and group $H$ (resp. subspace of nilpotent elements)
		\\
		$\mu_H, H_2(H, c_H)$
		&
		\Cref{subsn:lifting-invariant}
		&
		notation related to the lifting invariant
		\\
		\hline
	\end{tabularx}
}

\section{The subgroup stratification of the variety of components}
\label{sn:stratification}

In this section, we establish the stratification of the variety of components (\Cref{prop:stratification-of-spec}).
To this end, we prove the more general \Cref{thm:z-ih-is-made-of-gamma}.

\subsection{Remarkable ideals and subrings of the ring of components}

\begin{definition}
	\label{defn:important-subspaces}
	Let $H$ be a subgroup of $G$.
	We define the following graded $k$-linear subspaces of the ring of components~$R$, each of which is the space spanned by components $m \in \Comp_{\PC}(G , \, D , \, \xi)$ (\Cref{defn:moncomp}) whose group $\gen{m}$ (\Cref{defn:main-invariants}) satisfies some condition:
	\begin{itemize}
		\item
			$I_H$ is spanned by components whose group contains $H$.
		\item
			$I^*_H$ is spanned by components whose group properly contains $H$.
		\item
			$J_H$ is spanned by components whose group is not properly contained in $H$.
		\item
			$J^*_H$ is spanned by components whose group is not contained in $H$.
		\item
			$R^H$ is spanned by components whose group is contained in $H$.
	\end{itemize}
\end{definition}

For any subgroup $H \subseteq G$, the subspaces $I_H$, $I^*_H$, $J_H$ and $J^*_H$ are homogeneous ideals of $R$, and~$R^H$ is a graded subalgebra of $R$, isomorphic to the quotient $R/J^*_H$ (\cite[Proposition~3.4.26]{SegThese}).
Note that, by \Cref{prop:dgen-are-monodromy-groups}, the inclusions $I^*_H \subseteq I_H$ and $J^*_H \subseteq J_H$ are strict if and only if $H \in \Sub_{G,D}$.
The following properties are straightforward to check (cf. \cite[Proposition~3.4.24]{SegThese}):

\begin{proposition}
	\label{prop:properties-of-ih-jh}
	The subspaces defined in \Cref{defn:important-subspaces} satisfy the following properties:
	\begin{enumerate}[label=(\roman*)]
		\item
			Let $H \subseteq H'$ be subgroups of $G$.
			Then, we have the inclusion $R^H \subseteq R^{H'}$ between the associated subalgebras of $R$, and ``mirrored'' inclusions between the associated ideals of $R$:
			\[
				I_{H'} \subseteq I_H
				\hspace{2cm}
				I^*_{H'} \subseteq I^*_H
				\hspace{2cm}
				J_{H'} \subseteq J_H
				\hspace{2cm}
				J^*_{H'} \subseteq J^*_H.
			\]
		\item
			We have:
			\[
				I_1 = J_1 = R^G = R
				\hspace{2cm}
				I^*_1 = J^*_1 = \varpi
				\hspace{2cm}
				I^*_G = J^*_G = 0
				\hspace{2cm}
				R^1 = k.
			\]
		\item
			For every subgroup $H \subseteq G$, we have:
			\[
				I^*_H = I_H \cap J^*_H
				\hspace{2cm}
				J_H = I_H + J^*_H.
			\]
		\item
			For every subgroup $H \subseteq G$, we have:
			\[
				I^*_H
				=
				\sum_{H' \supsetneq H}
					I_{H'}
				\hspace{2cm}
					J_H
				=
				\bigcap_{H' \subsetneq H}
					J^*_{H'}.
			\]
		\item
			For all subgroups $H_1, H_2 \subseteq G$, we have:
			\[
				I_{\gen{H_1,H_2}}
				=
				I_{H_1} \cap I_{H_2}
				\hspace{2cm}
				J^*_{H_1 \cap H_2}
				=
				J^*_{H_1} + J^*_{H_2}.
			\] 
	\end{enumerate}
\end{proposition}

\subsection{Remarkable subsets of the variety of components}

We rephrase some of the properties from \Cref{prop:properties-of-ih-jh} geometrically:
\begin{proposition}
	\label{prop:properties-Zih}
	The closed subsets $V(I_H)$, $V(I^*_H)$, $V(J_H)$ and $V(J^*_H)$ of $\Spec\,R$, defined using the ideals from \Cref{defn:important-subspaces}, satisfy the following properties:
	\begin{enumerate}[label=(\roman*)]
		\item
			\label{prop:properties-Zih-2}
			Let $H \subseteq H'$ be subgroups of $G$.
			Then, the following inclusions hold:
			\[
				V(I_H) \subseteq V(I_{H'})
				\hspace{1.5cm}
				V(I^*_H) \subseteq V(I^*_{H'})
				\hspace{1.5cm}
				V(J_H) \subseteq V(J_{H'})
				\hspace{1.5cm}
				V(J^*_H) \subseteq V(J^*_{H'}).
			\]
		\item
			\label{prop:properties-Zih-ii}
			We have:
			\[
				V(I_1) = V(J_1) = \emptyset
				\hspace{2cm}
				V(I^*_1) = V(J^*_1) = \{\varpi\}
				\hspace{2cm}
				V(I^*_G) = V(J^*_G) = \Spec\,R.
			\]
		\item
			\label{prop:properties-Zih-4}
			For every subgroup $H \subseteq G$, we have:
			\[
				V(I^*_H) = V(I_H) \cup V(J^*_H)
				\hspace{2cm}
				V(J_H) = V(I_H) \cap V(J^*_H).
			\]

		\item
			\label{prop:properties-Zih-5}
			For every subgroup $H \subseteq G$, we have:
			\[
				V(I^*_H)
				=
				\bigcap_{
					H' \supsetneq H
				}
					V(I_{H'})
				\hspace{2cm}
				V(J_H)
				=
				\bigcup_{
					H' \subsetneq H
				}
					V(J^*_{H'}).
			\]

		\item
			\label{prop:properties-Zih-6}
			For all subgroups $H_1, H_2 \subseteq G$, we have:
			\[
				V(I_{\gen{H_1,H_2}}) = V(I_{H_1}) \cup V(I_{H_2})
				\hspace{2cm}
				V(J^*_{H_1 \cap H_2}) = V(J^*_{H_1}) \cap V(J^*_{H_2}).
			\] 
	\end{enumerate}
\end{proposition}

\subsection{The strata $\gamma(H)$}

We fix a subgroup $H$ of $G$.

\begin{definition}
	\label{defn:stratum}
	The \emph{stratum associated to $H$} is the (locally closed) subset
	$
		\gamma(H)
		\eqdef
		V(I^*_H) \setminus V(I_H)
	$
	of~$\Spec\,R$, i.e., the set of prime ideals of $R$ which contain $I^*_H$ but do not contain $I_H$.
\end{definition}

In other words, a prime ideal $p$ of~$R$ belongs to $\gamma(H)$ if and only if $p$ contains every component whose group is strictly larger than $H$, but there is a component $m$ of group exactly $H$ which does not belong to $p$.
The stratum $\gamma(H)$ is nonempty if and only if $H \in \Sub_{G,D}$.
Note that $\gamma(1) = \{\varpi\}$ by \iref{prop:properties-Zih}{prop:properties-Zih-ii}.

\begin{proposition}
	\label{prop:gamma-with-J}
	The stratum $\gamma(H)$ is equal to $V(J^*_H) \setminus V(J_H)$.
\end{proposition}

\begin{proof}
	\raisegroup
	\begin{align*}
		\gamma(H)
		& =
		V(I^*_H) \setminus V(I_H)
		&
		\text{by \Cref{defn:stratum}}
		\\
		& =
		\Big(
			V(I_H) \cup V(J^*_H)
		\Big)
		\setminus
		V(I_H)
		&
		\text{by \iref{prop:properties-Zih}{prop:properties-Zih-4}}
		\\
		& =
		V(J^*_H)
		\setminus
		\Big(
			V(I_H) \cap V(J^*_H)
		\Big)
		\\
		& =
		V(J^*_H) \setminus V(J_H)
		&
		\text{by \iref{prop:properties-Zih}{prop:properties-Zih-4}}.
		&\qedhere
	\end{align*}
\end{proof}

The relations between $V(I^*_H), V(I_H), V(J^*_H), V(J_H)$ and $\gamma(H)$ are summarized by the following diagram:
\begin{center}
	\includegraphics[scale=.5]{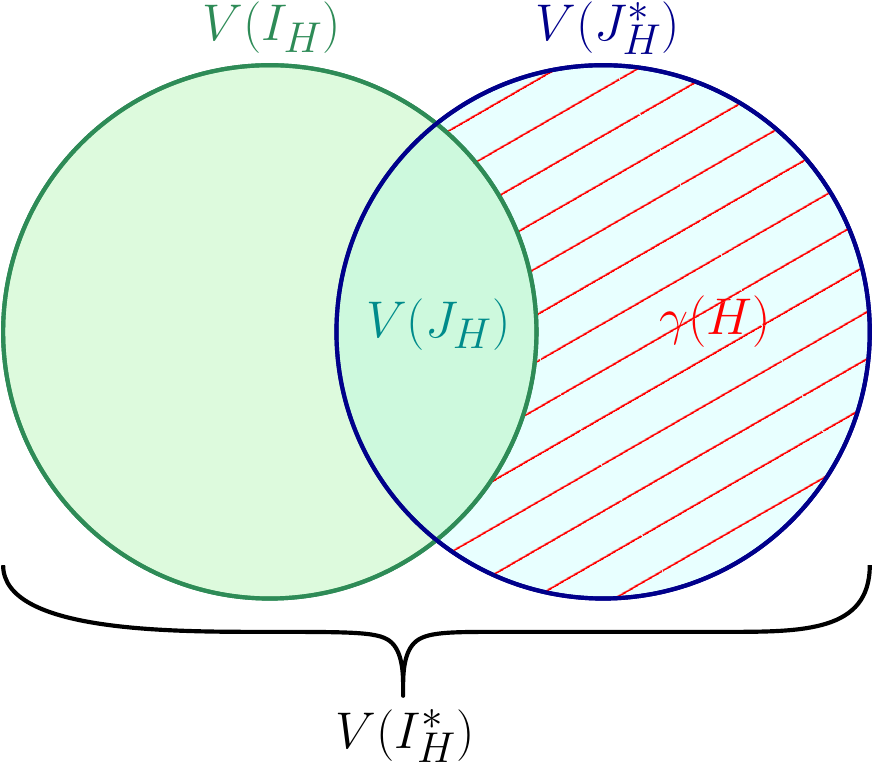}
\end{center}

In general, the strata are neither closed nor open subsets of $\Spec\,R$, and as such they not have obvious algebraic counterparts.
However, their closure in $\Spec\,R$ can be described using ideal quotients:

\begin{definition}
	\label{defn:ideal-gamma}
	We define the subset $\Gamma_H \subseteq R$ as the ideal quotient $(\sqrt{I^*_H}:\sqrt{I_H})$:
	\[
		\Gamma_H
		\eqdef
		(\sqrt{I^*_H}:\sqrt{I_H})
		=
		\left\lbrace
			x \in R
			\verti
			x \sqrt{I_H} \subseteq \sqrt{I^*_H}
		\right\rbrace
		=
		\left\lbrace
			x \in R
			\verti
			(x) \cap \sqrt{I_H} \subseteq \sqrt{I^*_H}
		\right\rbrace
		.
	\]
	(The last equality comes from the fact that a product of two ideals has the same radical as their intersection.)
\end{definition}

\begin{proposition}
	\label{prop:GammaH}
	The set $\Gamma_H$ satisfies the following properties:
	\begin{enumerate}[label=(\roman*)]
		\item
			\label{prop:GammaH-i}
			$\Gamma_H$ is a homogeneous radical ideal of $R$, namely the intersection of all prime ideals $p \in \gamma(H)$.
		\item
			\label{prop:GammaH-ii}
			The closed subset $V(\Gamma_H) \subseteq \Spec\,R$ associated to $\Gamma_H$ is the closure of $V(I_H) \setminus V(I^*_H) = \gamma(H)$.
		\item
			\label{prop:GammaH-iii}
			$\dim_\Krull R/\Gamma_H = \dim_\Krull \overline{\gamma(H)} = \dim_\Krull \gamma(H)$.
		\item
			\label{prop:GammaH-iv}
			$\Gamma_H \cap \sqrt{I_H} = \sqrt{I^*_H}$.
		\item
			\label{prop:GammaH-v}
			An ideal $I$ of $R$ satisfies $I \cap \sqrt{I_H} \subseteq \sqrt{I^*_H}$ if and only if $I$ is contained in $\Gamma_H$.
		\item
			\label{prop:GammaH-vi}
			Any ideal $I$ of $R$ satisfying $I \cap I_H \subseteq \Gamma_H$ is contained in $\Gamma_H$.
	\end{enumerate}
\end{proposition}

\begin{proof}
	Points \ref{prop:GammaH-i} and \ref{prop:GammaH-ii} follow from well-known properties of ideal quotients, see for example \cite[Chapter~4, §4]{coxlittleoshea}.
	Point \ref{prop:GammaH-iii} follows from \ref{prop:GammaH-ii} (for the first equality) and \autocite[Proposition I.1.10]{hartshorne} (for the second equality).
	Points \ref{prop:GammaH-iv} and \ref{prop:GammaH-v} follow directly from the definition of $\Gamma_H$.
	For point \ref{prop:GammaH-vi}, assume that~$I$ is an ideal such that $I \cap I_H \subseteq \Gamma_H$.
	Then, $I \cap \sqrt{I_H} \subseteq \sqrt{I \cap I_H} \subseteq \Gamma_H \cap \sqrt{I_H} = \sqrt{I^*_H}$, where the last equality  is point \ref{prop:GammaH-iv}.
	By point \ref{prop:GammaH-v}, we then have $I \subseteq \Gamma_H$.
\end{proof}

\subsection{The subgroup stratification of $\Spec\,R$}

In this subsection, we show that the strata $\gamma(H)$ defined in \Cref{defn:stratum} form a stratification of the variety of components.
We first show that the strata are disjoint:

\begin{proposition}
	\label{prop:strata-disjoint}
	Let $H, H'$ be two distinct subgroups of $G$.
	Then $\gamma(H) \cap \gamma(H') = \emptyset$.
\end{proposition}

\begin{proof}
	If $H \subsetneq H'$, then the set $\gamma(H)$ is contained in $V(I^*_H)$ and hence in $V(I_{H'})$ by \iref{prop:properties-Zih}{prop:properties-Zih-5}, and thus $\gamma(H')$ has an empty intersection with~$\gamma(H)$.
	The case $H' \subsetneq H$ is symmetric.
	We now assume that neither of $H$ and $H'$ is contained in the other, so that $\tilde H = \gen{H,\, H'}$ is a subgroup of $G$ strictly larger than both~$H$ and $H'$.
	We let $J = \gamma(H) \cap \gamma(H')$.
	By \iref{prop:properties-Zih}{prop:properties-Zih-5}, we have $V(I^*_H) \subseteq V(I_{\tilde H})$ and hence $\gamma(H) \subseteq V(I_{\tilde H})$.
	The same holds for $H'$ and thus $J \subseteq V(I_{\tilde H})$.
	But then $V(I_{\tilde H}) \setminus J$ is a subset of $V(I_{\tilde H})$ containing both $V(I_H)$ and $V(I_{H'})$.
	By \iref{prop:properties-Zih}{prop:properties-Zih-6}, we have $V(I_{\tilde H}) = V(I_H) \cup V(I_{H'})$ which finally implies $J=\emptyset$.
\end{proof}

We now prove the general form of the stratification:
\begin{theorem}
	\label{thm:z-ih-is-made-of-gamma}
	For every subgroup $H \subseteq G$, we have the following equalities:%
	{\everymath={\displaystyle}%
	\[\begin{array}{ccc}
		V(I_H) =
		\bigsqcup_{\substack{
			H' \textnormal{ subgroup of } G \\
			\textnormal{not containing } H
		}}
			\gamma(H')
		& &
		V(I^*_H) =
		\bigsqcup_{\substack{
			H' \textnormal{ subgroup of } G \\
			\textnormal{not properly containing } H
		}}
			\gamma(H')
		\\
		\vspace{0.1cm}
		&
		\phantom{aaaaaa}
		&
		\\
		V(J_H) =
		\bigsqcup_{H' \subsetneq H}
			\gamma(H')
		& &
		V(J^*_H) =
		\bigsqcup_{H' \subseteq H}
			\gamma(H').
	\end{array}\]
	}
\end{theorem}

By setting $H=G$ in the equality concerning $V(I^*_H)$, \Cref{thm:z-ih-is-made-of-gamma} implies \Cref{prop:stratification-of-spec}.

\begin{proof}
	We show by decreasing induction on the size of a subgroup $H$ of $G$ that:
	\begin{equation}
		\label{eqn:rechyp-stratif}
		\Spec\,R \setminus V(I_H)
		=
		\bigcup_{H' \supseteq H}
			\gamma(H').
	\end{equation}
	Let $H$ be a subgroup of $G$, and assume that every subgroup $H' \subseteq G$ with $\card{H'}>\card{H}$ satisfies \Cref{eqn:rechyp-stratif}.
	Then:
	\begin{align*}
		\Spec\,R
		\setminus
		V(I^*_H)
		& =
		\Spec\,R
		\setminus
		\bigcap_{H' \supsetneq H}
			V(I_{H'})
		&
		\text{by \iref{prop:properties-Zih}{prop:properties-Zih-5}}
		\\
		& =
		\bigcup_{H' \supsetneq H}
			\Big(
				\Spec\,R \setminus
				V(I_{H'})
			\Big)
		\\
		& =
		\bigcup_{H' \supsetneq H}
			\,
			\bigcup_{H'' \supseteq H'}
				\gamma(H'')
		&
		\text{by induction hypothesis}
		\\
		& =
		\bigcup_{H' \supsetneq H}
			\gamma(H').
	\end{align*}
	We conclude the induction by using the equality $\Spec\,R \setminus V(I_H) = \gamma(H) \cup (\Spec\,R \setminus V(I^*_H))$, which follows directly from \Cref{defn:stratum}.
	The union in \Cref{eqn:rechyp-stratif} is a disjoint union by \Cref{prop:strata-disjoint}.
	Plugging in $H=1$ yields:
	\begin{equation}
		\label{eqn:partition-of-V}
		\Spec\,R
		=
		\Spec\,R \setminus V(I_1)
		=
		\bigsqcup_{H' \subseteq G}
			\gamma(H')
	\end{equation}
	which is \Cref{prop:stratification-of-spec}.
	Finally:
	\begin{align*}
		V(I_H)
		& =
		\Spec\,R \setminus
		\Big(
			\Spec\,R \setminus
			V(I_H)
		\Big)
		\\
		& =
		\left(
			\bigsqcup_{H' \subseteq G}
				\gamma(H')
		\right)
		\setminus
		\left(
			\bigsqcup_{H' \supseteq H}
				\gamma(H')
		\right)
		&
		\text{
			by
			\Cref{eqn:partition-of-V,eqn:rechyp-stratif}
		}
		\\
		& =
		\bigsqcup_{\substack{
			H' \textnormal{ subgroup of } G \\
			\textnormal{not containing } H
		}}
			\gamma(H').
	\end{align*}
	Since $V(I^*_H) = V(I_H) \cup \gamma(H)$ (\Cref{defn:stratum}), we have the corresponding formula for $V(I^*_H)$.

	Dually, we show by increasing induction on the size of a subgroup $H \subseteq G$ that:
	\begin{equation}
		\label{eqn:rechyp-stratif-2}
		V(J^*_H)
		=
		\bigcup_{H'\subseteq H}
			\gamma(H').
	\end{equation}
	Let $H$ be a subgroup of $G$, and assume that every subgroup $H'$ of $G$ with $\card{H'} < \card{H}$ satisfies \Cref{eqn:rechyp-stratif-2}.
	Then:
	\begin{align*}
		V(J_H)
		& =
		\bigcup_{H'\subsetneq H}
			V(J^*_{H'})
		&
		\text{by \iref{prop:properties-Zih}{prop:properties-Zih-5}}
		\\
		& =
		\bigcup_{H'\subsetneq H}
			\bigcup_{H''\subseteq H'}
				\gamma(H'')
		&
		\text{by induction hypothesis}
		\\
		& =
		\bigcup_{H'\subsetneq H}
			\gamma(H').
	\end{align*}
	We conclude by using \Cref{prop:gamma-with-J} to relate $V(J_H)$ and $V(J^*_H) = \gamma(H) \cup V(J_H)$.
\end{proof}

\section{Nilpotent elements of the ring of components}
\label{sn:ringcomp-nearly-reduced}

In this section, we prove \Cref{thm:ringcomp-nearly-reduced}, which states that the ring of components satisfies a weak asymptotic form of reducedness.
This property is required for the proof of \Cref{thm:dim-gamma-omega}.

For the whole section, we fix a subgroup $H$ of $G$.
We let $c_H = c \cap H = \bigsqcup_{\gamma \in D} \gamma \cap H$, and we let~$D^*_H$ be the set of all conjugacy classes of $H$ contained in $c_H$.
Recall from \Cref{defn:splitting-number} that the splitting number of $H$ is $\Omega(D_H) = \card{D^*_H} - \card{D}$.
For each $n \in \N$, we denote by $R_{n,H}$ the subspace of~$R$ spanned by components of group exactly $H$ and of degree~$n$, and by $N_{n,H}$ the subspace of~$R_{n,H}$ consisting of nilpotent elements.
In other words, $R_{n,H} = (I_H \cap R^H)_n$ and $N_{n,H} = \sqrt{0} \cap R_{n,H}$.
We now state the theorem:

\begin{theorem}
	\label{thm:ringcomp-nearly-reduced}
	$\dim_k N_{n,H} = \Oo{n^{\Omega(D_H)-1}}$.
\end{theorem}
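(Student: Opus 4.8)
The plan is to show that the nilpotent elements of degree $n$ with group $H$ are confined to a "thin" subspace by proving that the multidiscriminant gradation already nearly separates nilpotents: for most $H$-likely maps $\psi$ of degree $n$, the whole space $R_\psi$ consists of non-nilpotent elements, and only a lower-order set of $\psi$'s can contribute to $N_{n,H}$. Recall from the proof of \Cref{prop:zeta-leq-omega} the decomposition $R_{n,H} = \bigoplus_{\psi \in \mathcal{L}_n(H,\xi)} R_\psi$, with $\dim R_\psi \le C$ for a constant $C$ independent of $n$. Since $N_{n,H}$ need not respect this decomposition, I would instead bound $\dim_k N_{n,H}$ by the number of $\psi \in \mathcal{L}_n(H,\xi)$ for which $R_\psi$ contains a nonzero nilpotent, times $C$; so it suffices to show this set of "bad" $\psi$ has size $\Os{n^{\Omega_D(H)-1}}$, i.e. grows one power of $n$ slower than $\card{\mathcal{L}_n(H,\xi)} = \Os{n^{\Omega_D(H)}}$ from \Cref{prop:counting-likely-maps}.

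The crux is a Conway–Parker–type input: a product $x y$ of components of group $H$ of sufficiently large and "balanced" multidiscriminant is non-nilpotent because, after raising to a suitable power, one can use the factorization lemma (\Cref{cor:factorization-with-multidisc}) and the generalized Conway–Parker theorem (as in \autocite{EVW2}, \autocite{wood}) to see that the powers $x^m$ stabilize to a nonzero "canonical" component rather than dying. Concretely, I expect the argument to run: if $x \in R_\psi$ is nilpotent, then for every component $h$ of group $H$, the product $h x$ is nilpotent too (nilpotents form an ideal after intersecting with the subring-like structure — more precisely use that the nilradical is an ideal of $R(G,\xi)$); taking $h$ with large multidiscriminant and invoking Conway–Parker forces $h x = 0$ in $R_\psi$ up to the ambiguity of $C$-dimensional fibers; but $R_\psi$ being annihilated by all such $h$ forces $\psi$ to avoid the asymptotically dominant region of $\mathcal{L}_n(H,\xi)$ — namely $\psi$ must have some coordinate "too small" (bounded by a constant), which cuts the dimension of the index set by one.

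To make the last step quantitative, I would fix the finitely many conjugacy classes $D_H$ and observe that an $H$-likely map $\psi$ of degree $n$ all of whose coordinates exceed a threshold $T$ (independent of $n$) lies in the regime where the Conway–Parker/factorization machinery applies, hence any $x \in R_\psi$ is non-nilpotent (or $R_\psi$ is reduced). The number of $\psi \in \mathcal{L}_n(H,\xi)$ with at least one coordinate $\le T$ is a union, over the (finitely many) classes $c' \in D_H$ and values $t \le T$, of sets defined by one extra linear constraint $\psi(c') = t$; each such slice is a count of lattice points in a polytope of dimension $\Omega_D(H) - 1$, hence is $O(n^{\Omega_D(H)-1})$, and summing the finitely many slices keeps the bound $O(n^{\Omega_D(H)-1})$. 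Combining, $\dim_k N_{n,H} \le C \cdot \#\{\text{bad }\psi\} = \Oo{n^{\Omega_D(H)-1}}$.

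The main obstacle I anticipate is the Conway–Parker step: turning "the multidiscriminant is large and balanced" into the honest non-nilpotency statement requires the generalized Conway–Parker theorem over the relevant ring $\Z$ or $k$ and a careful interaction with characteristic-$p$ and field-of-definition issues — which is presumably where Łoś's theorem and the completeness of the theory of algebraically closed fields of characteristic $0$ enter, reducing the general case to $k = \C$ (or $\overline{\Q}$) where topological/homological inputs are available. Handling the $C$-dimensional ambiguity of $R_\psi$ — i.e. ensuring nilpotency of one element of $R_\psi$ does not spuriously escape the counting — is a secondary technical point that I would address by noting $N_{n,H} \cap R_\psi \subseteq R_\psi$ has dimension $\le C$ and is nonzero only for bad $\psi$.
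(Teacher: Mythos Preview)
Your high-level plan matches the paper's: decompose $R_{n,H} = \bigoplus_\psi R_\psi$, show (this is the paper's \Cref{lem:few-small}) that the number of $\psi \in \mathcal{L}_n(H,\xi)$ with some coordinate below a fixed threshold $M$ is $O(n^{\Omega_D(H)-1})$, and argue that the remaining ``good'' $\psi$ contribute no nilpotents. One small correction: your worry that $N_{n,H}$ might not respect the $\psi$-decomposition is unfounded, since the nilradical of the $(\Zz)^{D_H}$-multigraded ring $R^H$ is itself multigraded; without this observation your proposed workaround would not actually yield the dimension bound.

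The genuine gap is the step ``good $\psi \Rightarrow R_\psi$ has no nonzero nilpotent''. Your discussion treats $x$ as a single component (``the powers $x^m$ stabilize to a nonzero canonical component''), but individual components are never nilpotent in a monoid algebra; the whole difficulty is with \emph{linear combinations}, where cross-terms in $x^r$ can cause cancellations. The paper handles this by an unexpected detour: it first works in characteristic $p$ coprime to $|G|$, where the Frobenius-type identity $(\sum_i \lambda_i m_i)^{p^r} = \sum_i \lambda_i^{p^r} m_i^{p^r}$ kills the cross-terms, and then uses Conway--Parker/Wood (\Cref{lem:equal-powers-implies-equal-comps-for-big-multidisc}) to show that $m \mapsto m^{p^r}$ is injective on components with good multidiscriminant, bounding the kernel of the linearized $p^r$-power map. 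The characteristic-zero case is then \emph{deduced from} the characteristic-$p$ case via \L o\'s's theorem and completeness of $\mathrm{ACF}_0$ --- the opposite direction to what you guessed. (In fact one can avoid the detour entirely: by Wood's theorem the submonoid of good group-$H$ components injects into the abelian group $\ker(U(H,c_H)\to H)\cong H_2(H,c_H)\times\ker(\Z^{D_H}\to H^{ab})$, whose group algebra over $k$ is reduced since the torsion part has order coprime to $\operatorname{char} k$; this gives the key step directly in every admissible characteristic.)
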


This should be compared to \cite[Proposition~3.5~(ii)]{countcomp}, which implies that
$
	\dim_k R_{n,H}
	\neq
	o\!\left(
		n^{\Omega(D_H)}
	\right)
$.
Therefore, in some sense, most high-degree elements of~$R$ are not nilpotent.
The methods used for the proof are diverse:
\begin{itemize}
	\item
		The main tool is \Cref{lem:equal-powers-implies-equal-comps-for-big-multidisc}, a result about the monoid of components.
		The proof involves the lifting invariant from \autocite{EVW2,wood}, which we review briefly in \Cref{subsn:lifting-invariant}.
	\item
		The case where $k$ has positive characteristic (coprime to $\card{G}$) is \Cref{cor:ringcomp-nearly-reduced-char-p}.
	\item
		The case of characteristic $0$ is \Cref{cor:ringcomp-nearly-reduced-char-zero}.
		We deduce it from the case of positive characteristic using classical results from model theory.
\end{itemize}

\subsection{Quick review of the lifting invariant}
\label{subsn:lifting-invariant}

In this short subsection, we recall some properties of the lifting invariant from \autocite{EVW2,wood}.

\begin{definition}
	\label{defn:multidisc}
	If $m \in \Comp_{\PC}(G , \, D , \, \xi)$ is a component of group contained in $H$, represented by a tuple $\gbar$ of elements of $G$, its \emph{$H$-multidiscriminant} is the map $\mu_H(m) : D^*_H \to \Z$ mapping a conjugacy class $\gamma \in D^*_H$ to the number of entries of $\gbar$ which belong to $\gamma$. (This is independent of $\gbar$.)
\end{definition}

\begin{definition}
	If $M$ is an integer and $m \in \Comp_{\PC}(G , \, D , \, \xi)$ is a component of group $H$, we say that $m$ is \emph{$M$-big} if its $H$-multidiscriminant $\mu_H(m)$ satisfies $\mu_H(m)(\gamma) \geq M$ for each class $\gamma \in D^*_H$.
\end{definition}

In \cite[Subsection~4.1]{countcomp}, we have defined a specific quotient $H_2(H, c_H)$ (following the notation of \cite{wood}) of the homology group~$H_2(H, \Z)$, and a morphism of monoids $\Comp_{\PC}(G , \, D , \, \xi) \to H_2(H, c_H)$ (the ``lifting invariant'', or rather its first coordinate).
The key properties of this invariant are summarized in the following theorem, which combines \cite[Theorems~4.1 and~4.5, Remark~4.3]{countcomp}:

\begin{theorem}
	\label{thm:conway-parker-red}
	There exists a constant $M$, depending only on the group $G$, such that any two $M$-big components of group $H$ with equal $H$-multidiscriminants and equal images in $H_2(H, c_H)$ are equal.
\end{theorem}

Fix an integer $M$ as in \Cref{thm:conway-parker-red}.
We conclude this subsection by recalling the lemma \cite[Lemma~4.6]{countcomp}, which implies that ``most components are $M$-big'':
\begin{lemma}
	\label{lem:few-small}
	For $n \in \N$, denote by $R_{n,H}^{M\textnormal{-small}}$ the subspace of $R$ spanned by components of group $H$ and degree $n$ which are not $M$-big.
	Then,
	$
		\dim_k
		R_{n,H}^{M\textnormal{-small}}
		=
		\Oo{n^{\Omega(D_H)-1}}
	$.
\end{lemma}

\subsection{Torsion in the monoid of components}

We use the notation from \Cref{subsn:lifting-invariant}.
In particular, we fix a constant $M$ as in \Cref{thm:conway-parker-red}.
We prove the following lemma, which implies that the monoid of components is <<~not far~>> from being torsionfree (a monoid is torsionfree if $x^r = y^r$ for $r \geq 1$ implies $x=y$):

\begin{lemma}
	\label{lem:equal-powers-implies-equal-comps-for-big-multidisc}
	Let $x, y$ be two $M$-big components of group $H$ such that, for some integer $r \in \N$ coprime with $\card{H}$, we have $x^r = y^r$.
	Then $x=y$.
\end{lemma}



\begin{proof}
	Since $x^r = y^r$, we have
	$
		\mu_H(x)
		= \frac1r \cdot \mu_H(x^r)
		= \frac1r \cdot \mu_H(y^r)
		= \mu_H(y)
	$.
	By \Cref{thm:conway-parker-red}, it remains only to show that the respective images $x', y'$ of $x$ and $y$ in $H_2(H, c_H)$ are equal.
	The equality $x^r = y^r$ implies $(x')^r = (y')^r$, which in turn implies that $x'(y')^{-1}$ is an element of $r$-torsion of the abelian group~$H_2(H, c_H)$.
	However, the group $H_2(H, c_H)$, as a quotient of the second homology group $H_2(H, \Z)$, is of $\card{H}$-torsion (cf. \cite[Chap.~VIII, \S2, Cor.~1]{serrecl}) and hence does not have any nontrivial $r$-torsion ($r$ is coprime with $\card{H}$).
	We have shown that $x'(y')^{-1} = 1$.
\end{proof}

\subsection{The case of positive characteristic}
\label{subsn:reduced-char-p}

We fix a constant $M$ as in \Cref{thm:conway-parker-red}.
Using \Cref{lem:few-small}, we also fix a constant $\tilde C$ such that, for all $n \in \N$, the number of components $m \in \Comp_{\PC}(G, D, \xi)$ of group $H$ and degree $n$ which are not $M$-big is bounded above by~$\tilde C \cdot n^{\Omega(D_H)-1}$.

\begin{corollary}
	\label{cor:ringcomp-nearly-reduced-char-p}
	Assume that the characteristic $p$ of $k$ is positive and coprime to $\card{G}$.
	Then:
	\[
		\forall n \in \N, \quad
		\dim_k N_{n,H}
		\leq
		\tilde C \cdot n^{\Omega(D_H)-1}.
	\]
\end{corollary}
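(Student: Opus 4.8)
The strategy is to use the Frobenius endomorphism. Since $R(G,\xi)$ is commutative (by \Cref{prop:ringcomp-is-fingen}) and of characteristic $p$, the map $x \mapsto x^p$ is a ring homomorphism, and hence for every $e$ and every element written in the monoid basis as $x = \sum_m \lambda_m m$ we have $x^{p^e} = \sum_m \lambda_m^{p^e} m^{p^e}$. If moreover $x \in R_{n,H}$, so that $m$ ranges only over components of group $H$ and degree $n$, then each $m^{p^e}$ is again a component, of group $H$ and degree $p^e n$. Thus if $x$ is nilpotent — say $x^N = 0$, so $x^{p^e} = 0$ for $p^e \geq N$ — we obtain a relation $\sum_m \lambda_m^{p^e} m^{p^e} = 0$ among monoid basis elements of $R(G,\xi)$.

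I would then split the components of group $H$ and degree $n$ into the finite set $F$ of those for which some conjugacy class of $D_H$ occurs at most $M-1$ times (with $M$ as in \Cref{thm:wood-bijection}), and the complementary set $\mathcal{B}$ of those in which every class of $D_H$ occurs at least $M$ times. The key point to establish is: if $m \in \mathcal{B}$, if $m'$ is any component of group $H$ and degree $n$, and if $m^{p^e} = m'^{p^e}$ for some $e \geq 1$, then $m = m'$. This follows by induction on $e$: applying \Cref{lem:equal-powers-implies-equal-comps-for-big-multidisc} to the components $m^{p^{e-1}}$ and $m'^{p^{e-1}}$ — noting that in $m^{p^{e-1}}$ every class of $D_H$ still occurs at least $p^{e-1}M \geq M$ times, that both have product $1$ and generate $H$, and that $p$ is coprime to $\card{G}$ — gives $m^{p^{e-1}} = m'^{p^{e-1}}$, and one concludes by the inductive hypothesis. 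In particular $m \mapsto m^{p^e}$ is injective on $\mathcal{B}$, and since $\mathcal{B} \cap F = \emptyset$ the sets $\{m^{p^e} : m \in \mathcal{B}\}$ and $\{m'^{p^e} : m' \in F\}$ are disjoint subsets of the monoid basis.

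To finish, I would take $x \in N_{n,H}$ and write $x = \sum_{m \in \mathcal{B}} \lambda_m m + \sum_{m' \in F} \mu_{m'} m'$. From $x^{p^e} = 0$ and the formula for $x^{p^e}$ above, comparing coefficients of the pairwise distinct basis elements $m^{p^e}$ ($m \in \mathcal{B}$), whose support is disjoint from that of the $F$-part, yields $\lambda_m^{p^e} = 0$ and hence $\lambda_m = 0$ for every $m \in \mathcal{B}$. Thus $N_{n,H} \subseteq \Span_k(F)$, so $\dim_k N_{n,H} \leq \card{F} = \HF_{H,\, \text{small}}(n)$, which is at most $\tilde C \cdot n^{\Omega_D(H)-1}$ by \Cref{lem:few-small}.

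The only substantive step is the injectivity claim, which is exactly where \Cref{lem:equal-powers-implies-equal-comps-for-big-multidisc} (and, through it, the coprimality of $p$ with $\card{G}$, via the absence of $p$-torsion in $H_2(H,c_H)$) is used; the rest is bookkeeping with the monoid basis of $R(G,\xi)$ and the additivity of Frobenius in characteristic $p$.
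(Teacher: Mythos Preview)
Your proof is correct and uses essentially the same ingredients as the paper: the Frobenius expansion in characteristic $p$, \Cref{lem:equal-powers-implies-equal-comps-for-big-multidisc} to get injectivity of $m \mapsto m^{p^e}$ on the ``big'' components, and \Cref{lem:few-small} to bound the number of ``small'' ones. Your presentation is in fact more direct than the paper's (which passes through an auxiliary $k$-linear map $u^r$ and a Frobenius-twisted basis rather than applying Frobenius to $x$ itself); as a minor remark, your induction on $e$ is unnecessary, since \Cref{lem:equal-powers-implies-equal-comps-for-big-multidisc} already allows any exponent coprime to $\card{G}$, in particular $p^e$.
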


\begin{proof}
	Fix some $n \in \N$.
	Let $u^r$ be the $k$-linear map $R_{n,H} \to R_{np^r,H}$ induced by $x \mapsto x^{p^r}$ on components.
	It follows from \Cref{lem:equal-powers-implies-equal-comps-for-big-multidisc} and \Cref{lem:few-small} that
	$
		\dim_k \ker(u^r)
		\leq
		\tilde C \cdot n^{\Omega(D_H)-1}
	$
	for all $r \in \N$.

	Choose a basis $x_1,\ldots,x_D$ of $N_{n,H}$, and complete it into a basis
	$
		\underbrace{
			x_1,
			\ldots,
			x_D
		}_{\in N_{n,H}},
		x_{D+1},
		\ldots,
		x_{D'}
	$
	of $R_{n,H}$.
	Note that $D= \dim N_{n,H}$ and $D'=\dim R_{n,H}$.
	Express the vectors $x_1,\ldots,x_{D'}$ in the basis $m_1,\ldots,m_{D'}$ of $R_{n,H}$ given by all components of group $H$ and of degree $n$:
	\[
		x_i
		=
		\sum_{j=1}^{D'}
			\lambda_{i,j} m_j.
	\]
	Since $x_1,\ldots,x_D$ are nilpotent, fix $r \geq 1$ such that $x_i^{p^r} = 0$ for all $i \in \{1, \ldots, D\}$.
	For $i \in \{1,\ldots, D'\}$, define:
	\[
		\tilde x_i
		\eqdef
		\sum_{j=1}^{D'}
			\lambda_{i,j}^{p^r} m_j.
	\]
	Then, $(\tilde x_1, \ldots, \tilde x_{D'})$ is still a basis of $R_{n,H}$.
	Indeed, by the properties of the Frobenius morphism, the determinant of the matrix $(\lambda_{i,j}^{p^r})_{i,j}$ is the $p^r$-th power of the determinant of the matrix $(\lambda_{i,j})_{i,j}$, which is nonzero because $x_1,\ldots,x_{D'}$ is a basis.
	In particular, the vectors $\tilde x_1,\ldots,\tilde x_D$ are linearly independent.

	Now, if $i \in \{1,\ldots,D\}$, we have:
	\[
		u^r(\tilde x_i)
		= \sum_{j=1}^{D'} \lambda_{i,j}^{p^r} u^r(m_j)
		= \sum_{j=1}^{D'} \lambda_{i,j}^{p^r} m_j^{p^r}
		=
		\left(
			\sum_{j=1}^{D'}
				\lambda_{i,j}
				m_j
		\right)^{p^r}
		= x_i^{p^r}
		= 0.
	\]
	So $\tilde x_1,\ldots,\tilde x_D$ is a list of $D$ linearly independent vectors in $\ker(u^r)$.
	The existence of such a list implies that $D \leq \dim_k \ker(u^r)$, and finally
	$
		\dim_k N_{n,H}
		=
		D
		\leq
		\dim_k \ker(u^r)
		\leq
		\tilde C \cdot n^{\Omega(D_H)-1}
	$.
\end{proof}

\subsection{The case of characteristic zero}
\label{subsn:reduced-char-zero}

The setting and the notation are the same as in \Cref{subsn:reduced-char-p}.

\begin{corollary}
	\label{cor:ringcomp-nearly-reduced-char-zero}
	Assume that the characteristic of $k$ is zero.
	Then:
	\[
		\forall n \in \N,
		\quad
		\dim_k N_{n,H}
		\leq
		\tilde C \cdot n^{\Omega(D_H)-1}.
	\]
\end{corollary}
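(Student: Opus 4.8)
The plan is to transfer the positive-characteristic bound of \Cref{cor:ringcomp-nearly-reduced-char-p} to characteristic zero by a model-theoretic compactness/transfer argument, as advertised in the introduction to this section. The key observation is that everything in sight is \emph{finitely presented over the base field}: by \Cref{prop:ringcomp-is-fingen} the ring $R(G,\xi)$ is a finitely generated commutative $k$-algebra, and its structure constants (in the basis of components, with the multiplication table coming from the monoid $\Comp(G,\xi)$) do not depend on $k$ at all — they are integers, in fact they lie in $\{0,1\}$. So for each fixed $n$ the statement ``$\dim_k N_{n,H} \le \tilde C \cdot n^{\Omega_D(H)-1}$'' is a statement about a fixed finite-dimensional algebra over $k$ that makes sense over any field, and the constant $\tilde C$ from \Cref{lem:few-small} is also independent of $k$.

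First I would fix $n$ and set $B = \tilde C \cdot n^{\Omega_D(H)-1}$ (an explicit integer). I claim that the property ``$\dim_k N_{n,H} \le B$'' is expressible by a single first-order sentence $\varphi_{n,B}$ in the language of rings with a parameter for the cardinality of the relevant finite-dimensional spaces. Concretely: $N_{n,H}$ is the set of $x \in R_{n,H}$ that are nilpotent in $R(G,\xi)$; membership ``$x$ is nilpotent'' is not first-order on its face, but here we only need nilpotency \emph{inside the finite-dimensional graded pieces up to some bounded degree}. Since $R_{n,H}$ lives in degree $n$ and $R(G,\xi)$ is a finitely generated graded algebra generated in degrees $\le \max(\exp(G),(\exp(G)-1)A)$ (from \Cref{lem:comp-is-fingen}), an element $x \in R_{n,H}$ is nilpotent if and only if $x^N = 0$ for some $N$; and because each graded piece $R_{nN,H}$ is finite-dimensional of dimension bounded by $\HF_H(nN)$, if $x^N \ne 0$ then $x$ generates an infinite-dimensional — wait, no: a cleaner route is to note that $R$ is a finitely generated $k$-algebra, hence Noetherian, so $N_{n,H} = R_{n,H} \cap \mathrm{nil}(R)$ and $\mathrm{nil}(R)$ is a nilpotent ideal, say $\mathrm{nil}(R)^{e} = 0$ for some $e$ that can be bounded uniformly in terms of the (field-independent) number of generators and a degree bound — so ``$x$ nilpotent'' is equivalent to ``$x^{e} = 0$'' with $e$ independent of $k$. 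Then ``$\dim_k N_{n,H} \le B$'' becomes: for all $x_0,\ldots,x_B \in R_{n,H}$ with each $x_i^e = 0$, the $x_i$ are $k$-linearly dependent — and $R_{n,H}$, being a finite-dimensional $k$-vector space with field-independent structure, is coordinatized by $\HF_H(n)$ scalars, so this is a bona fide first-order sentence over $k$.

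Next I would apply the Łoś/compactness transfer: the sentence $\varphi_{n,B}$ holds in $\overline{\mathbb{F}_p}$ for every prime $p$ coprime to $\card{G}$ by \Cref{cor:ringcomp-nearly-reduced-char-p} (using that algebraic closure does not change dimensions of the $N_{n,H}$, or simply noting $\overline{\mathbb{F}_p}$ contains a field of each relevant size). By the classical fact — following from the completeness of the theory ACF$_0$ and the compactness theorem — that a first-order sentence true in $\overline{\mathbb{F}_p}$ for infinitely many $p$ is true in every algebraically closed field of characteristic $0$, we conclude $\varphi_{n,B}$ holds in $\overline{\Q}$, hence $\dim_{\overline{\Q}} N_{n,H} \le B$. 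Finally, for a general characteristic-zero field $k$, the dimension of $N_{n,H}$ is unchanged under base change between fields of characteristic $0$ (it is the rank of a fixed matrix with entries polynomial in the field-independent structure constants, vanishing loci of which are preserved), so $\dim_k N_{n,H} \le \tilde C \cdot n^{\Omega_D(H)-1}$, which is the claim.

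The main obstacle I anticipate is making the ``$x$ nilpotent'' condition genuinely first-order with a \emph{field-independent} exponent $e$: one must argue carefully that the nilradical of $R(G,\xi)$ has nilpotency index bounded independently of the characteristic of $k$. This follows because the reduced ring $R(G,\xi)_{\mathrm{red}}$ and the ideal $\mathrm{nil}(R)$ are defined by the same field-independent monomial/structure data, so one can run the argument over $\Z[\Comp(G,\xi)]$ (or a localization) once and for all, or alternatively bound $e$ by an Artin–Rees / Noetherianity argument whose input (number of generators, degrees) is visibly independent of $k$; a slicker alternative is to avoid the issue entirely by working with the fact that $R_{n,H} \cap \mathrm{nil}(R)$ equals the kernel of $x \mapsto x^{N}$ for $N$ large, combined with the observation that once $N$ exceeds the (field-independent) nilpotency index this kernel stabilizes — so one picks any sufficiently large field-independent $N$ and the sentence $\varphi_{n,B}$ uses that $N$. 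Everything else is routine bookkeeping with the transfer principle.
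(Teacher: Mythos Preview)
Your approach --- model-theoretic transfer from $\overline{\mathbb{F}_p}$ to characteristic zero via Łoś's theorem and the completeness of ACF$_0$ --- is exactly the paper's. The one difference is how the nilpotency condition is made first-order. You look for a single field-independent exponent $e$ so that ``$x$ nilpotent'' becomes ``$x^e = 0$'', and you correctly flag this as the delicate step. The paper sidesteps it entirely: instead of a single sentence, it transfers, for \emph{each} $r \geq 1$, the sentence $\varphi_r$ asserting that the subspace $N_{n,H,r} = \{x \in R_{n,H} : x^r = 0\}$ has dimension at most $B$; each $\varphi_r$ holds in every $\overline{\mathbb{F}_p}$ because $N_{n,H,r} \subseteq N_{n,H}$, hence each $\varphi_r$ holds in characteristic zero, and then $\dim N_{n,H} = \sup_r \dim N_{n,H,r} \leq B$ since $N_{n,H}$ is the increasing union of the $N_{n,H,r}$. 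This avoids any discussion of a uniform nilpotency index.

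Your version also works, and in fact needs less than you worry about: you only need $e$ to exceed the nilpotency index of $\mathrm{nil}(R_{\Q})$, which is finite by Noetherianity and controls every characteristic-zero base field since $\Q$ is perfect (so $R_{\Q}/\mathrm{nil}(R_{\Q})$ stays reduced after any field extension). The resulting sentence $\varphi_e$ then holds in each $\overline{\mathbb{F}_p}$ automatically, simply because $N_{n,H,e} \subseteq N_{n,H}$ there --- no bound on the positive-characteristic nilpotency index is needed. So your discussion of an exponent ``independent of the characteristic of $k$'' is asking for more than the argument requires; the paper's trick of quantifying over $r$ is the cleanest way to make this worry disappear.
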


\begin{proof}
	Since dimensions do not change under field extensions, we may assume that $k$ is algebraically closed.
	Fix an integer $n \in \N$ and denote by $F$ the finite set of components of degree $n$ and group~$H$.
	For each $r \geq 1$, define the following finite subset of $\Comp_{\PC}(G,D,\xi)$:
	\[
		F^{\wedge r}
		=
		\left\{
			m_1 m_2 \cdots m_r
			\verti
			m_1, m_2, \ldots, m_r \in F
		\right\}.
	\]

	Consider an arbitrary field $K$.
	For every $r \geq 1$ and every element
	$
		x
		=
		\sum_{m \in F}
			\lambda_m m
	$
	in the $K$-vector space spanned by~$F$, we have the following equality in $K[\Comp_{\PC}(G,D,\xi)]$:
	\begin{align*}
		x^r
		& =
		\sum_{m_1, m_2,\ldots,m_r \in F}
			\lambda_{m_1} \lambda_{m_2} \cdots \lambda_{m_r} \cdot (m_1 m_2 \cdots m_r) \\
		& =
		\sum_{m \in F^{\wedge r}}
			\left ( 
				\sum_{\substack{m_1, m_2, \ldots, m_r \in F \\ m_1 m_2 \cdots m_r = m}}
					\lambda_{m_1} \lambda_{m_2} \cdots \lambda_{m_r}
			\right )
			m.
	\end{align*}
	If $\underline{\lambda} = (\lambda_m)_{m \in F}$ is a tuple of variables indexed by $F$, we denote by $\texttt{Nilp}_r(\underline{\lambda})$ the following conjunction, which is a first-order property in the language of fields expressing the fact that the element $x=\sum_m \lambda_m m$ satisfies $x^r = 0$ in $K[\Comp_{\PC}(G,D,\xi)]$:
	\[
		\bigwedge\limits_{m \in F^{\wedge r}}
		\left(
			\sum_{\substack{
				m_1, m_2, \ldots, m_r \in F \\
				m_1 m_2 \cdots m_r = m
			}}
				\lambda_{m_1}
				\lambda_{m_2}
				\cdots
				\lambda_{m_r}
			= 0
		\right).
	\]

	Let $d = \tilde C \cdot n^{\Omega(D_H)-1} + 1$.
	If $\underline{\lambda}^{(1)}, \ldots, \underline{\lambda}^{(d)}$ are $d$ tuples of variables, each indexed by $F$, we denote by $\texttt{Indep}(\underline{\lambda}^{(1)}, \ldots, \underline{\lambda}^{(d)})$ the following first-order property in the language of fields, expressing the fact that the elements $x^{(i)} = \sum_m \lambda^{(i)}_m m$ are linearly independent:
	\[
		\forall x_1, \ldots, \forall x_d,
		\quad
		\left(
			\bigwedge_{m \in F} [
				x_1 \lambda^{(1)}_m +
				\cdots +
				x_d \lambda^{(d)}_m
				=
				0
			]
		\right)
		\implies
		\left(
			x_1 = 0 \land x_2 = 0 \land \ldots \land x_d = 0
		\right).
	\]

	Finally, we define the first-order property $\varphi_{r}$ as:
	\[
		\forall \underline\lambda^{(1)}, \ldots, \forall \underline\lambda^{(d)},
		\quad
		\left(
			\texttt{Nilp}_r(\underline\lambda^{(1)}) \land
			\ldots \land
			\texttt{Nilp}_r(\underline\lambda^{(d)})
		\right)
		\implies
		\neg \texttt{Indep}(
			\underline\lambda^{(1)},
			\ldots,
			\underline\lambda^{(d)}
		),
	\]
	expressing the fact that any elements $x^{(1)},\ldots,x^{(d)}$ of $\Span_K(F)$ whose $r$-th powers all vanish in $K[\Comp_{\PC}(G,D,\xi)]$ are linearly dependent, i.e., that the dimension of the subspace of $\Span_K(F)$ spanned by elements whose $r$-th powers are zero is at most $d-1 = \tilde C \cdot n^{\Omega(D_H)-1}$.

	By \Cref{cor:ringcomp-nearly-reduced-char-p}, the field $\overline{\mathbb{F}_p}$ satisfies the property $\varphi_{r}$ for all $r \in \N$ when $p$ is a prime not dividing~$\card{G}$.
	Let $\mathcal{U}$ be a non-principal ultrafilter on the set $\mathcal{P}$ of primes coprime to $\card{G}$, and let $\mathbb{K}$ be the ultraproduct $\prod_{p\in\mathcal{P}} \bar{\mathbb{F}_p} / \mathcal{U}$, i.e., the quotient of the ring
	$
		\prod_{p \in \mathcal{P}}
			\overline{\mathbb{F}_p}
	$
	by the (maximal) ideal generated by elements whose coordinates vanish for all primes $p$ in some set $\mathcal{P}' \in \mathcal{U}$.
	By \L o\'s's theorem \autocite[Exercise 2.5.18]{modelth}, $\mathbb{K}$ is an algebraically closed field of characteristic zero satisfying $\varphi_{r}$ for all $r \geq 1$.
	Since the theory of algebraically closed fields of characteristic zero is complete \autocite[Corollary 3.2.3]{modelth}, the same is true of~$k$.
	Therefore, the dimension of the subspace $N_{n,H,r}$ of $R_{n,H}$ spanned by elements whose $r$-th power vanishes is at most $\tilde C \cdot n^{\Omega(D_H)-1}$, for all $r \geq 1$.
	As the space $N_{n,H}$ is the increasing union of the subspaces $N_{n,H,r}$ for $r \geq 1$, we have:
	\[
		\dim_k N_{n,H}
		=
		\sup_{r \geq 1}
			\big(
				\dim_k N_{n,H,r}
			\big)
		\leq
		\tilde C \cdot n^{\Omega(D_H)-1}.
		\qedhere
	\]
\end{proof}

\section{Dimensions and degrees of the strata}
\label{sn:proof-dim-gamma-eq-zeta}

In this section, we prove \Cref{thm:dim-gamma-omega}.
We fix a nontrivial $D$-generated subgroup $H \in \Sub_{G,D}$, and we prove that the Krull dimension of the stratum $\gamma(H)$ (\Cref{defn:stratum}) is related to the splitting number~$\Omega(D_H)$ (\Cref{defn:splitting-number}) by the equality
$
	\dim_\Krull
		\gamma(H)
	=
	\Omega(D_H) + 1
$.

The section is organized in the following way:
\begin{itemize}
	\item
		In \Cref{subsn:main-lemma}, we introduce tools we need.
		We prove a bound on the Krull dimension of some quotients (\Cref{lem:dimkrull-ineq}), we recall the general form of the Hilbert-Serre theorem for algebras whose generators have unequal degrees (\Cref{thm:hilbser}), and we combine these two ingredients in \Cref{cor:dimj-hf}.
	\item
		In \Cref{subsn:proof-gamma-eq-zeta}, we compute the Krull dimension of the strata (\Cref{thm:dim-gamma-omega}) using the tools from \Cref{subsn:main-lemma} and the counting results from \cite{countcomp}.
	\item
		In \Cref{subsn:degree-gamma}, we relate the degree of $\gamma(H)$ (embedded in projective space) to the leading coefficients computed in \cite[Section~4]{countcomp}.
\end{itemize}

\subsection{Preliminaries}
\label{subsn:main-lemma}

In this subsection, we introduce tools which are later used to compute the dimension of $\gamma(H)$.
The main tool is the following lemma:

\begin{lemma}
	\label{lem:dimkrull-ineq}
	Let $B$ be a nonzero finitely generated commutative graded $k$-algebra and $J$ be a homogeneous ideal of $B$ such that, for all ideals $I$ of $B$, we have:
	\begin{equation}
		\label{eqn:hyp-on-ideal}
		I \cap J \subseteq \sqrt{0}
		\implies
		I \subseteq \sqrt{0}.
	\end{equation}
	Then, we have $\dim_\Krull(B/J) < \dim_\Krull B$.
\end{lemma}

\begin{proof}
	Let $p_1, \ldots, p_u$ be the minimal homogeneous prime ideals of $B$, whose number is finite as $B$ is Noetherian.

	Assume by contradiction that $J$ is contained in $p_i$ for some $i \in \{1,\ldots,u\}$, and let $V_i = \bigcap_{j \neq i} p_j$.
	The ideal $V_i \cap p_i$ is the intersection of all minimal homogeneous primes of~$B$, i.e., the nilradical $\sqrt{0}$.
	Since $J \subseteq p_i$, we have $V_i \cap J \subseteq V_i \cap p_i = \sqrt{0}$.
	By \Cref{eqn:hyp-on-ideal}, this implies $V_i \subseteq \sqrt{0}$.
	But then $V_i \subseteq p_i$ with $p_i$ prime, and thus some $p_j$ with $j \neq i$ must be contained in $p_i$, contradicting the minimality of~$p_i$.
	We have shown that $J$ is not contained in any $p_i$.

	Consider a maximal chain
	$
		J \subseteq
		q_0 \subsetneq q_1 \subsetneq \ldots \subsetneq q_{\dim_\Krull(B/J)}
	$
	of prime ideals of~$B$ containing $J$.
	By the previous paragraph, the inclusion $J \subseteq q_0$ implies that $q_0$ is not a minimal homogeneous prime ideal of~$B$.
	Therefore, $q_0$ contains some $p_i$ for $i \in \{1, \ldots, u\}$.
	We then have the chain
	$
		p_i \subsetneq q_0 \subsetneq q_1 \subsetneq \ldots \subsetneq q_{\dim_\Krull(B/J)}
	$
	of homogeneous prime ideals of $B$, proving that $\dim_\Krull B > \dim_\Krull(B/J)$.
\end{proof}

A second tool is \Cref{thm:hilbser}, which is a variant of the Hilbert-Serre theorem for graded algebras whose generators are not necessarily of degree $1$.
The case where all generators have degree $1$ is classical (see for example \autocite[Theorem I.7.5]{hartshorne}), and in that case the Hilbert function eventually coincides with a polynomial.
In the general case, there is instead a finite list of polynomials through which the Hilbert function eventually cycles periodically: we have a \emph{Hilbert quasi-polynomial}.

\begin{theorem}[Hilbert-Serre]
	\label{thm:hilbser}
	Let $A$ be a commutative graded $k$-algebra of positive Krull dimension, generated by finitely many generators of respective degrees $d_1, \ldots, d_N$.
	Let $W = \lcm(d_1, \ldots, d_N)$.
	Then, there exist (uniquely defined) polynomials $Q_0, \ldots, Q_{W-1}$ such that:
	\begin{enumerate}[label=(\roman*)]
		\item
			\label{thm:hilbser-1}
			For all $m \in \{0, \ldots, W-1\}$ and $n$ large enough, we have
			$
				\HF_A(W n + m) = Q_m(n)
			$;
		\item
			\label{thm:hilbser-2}
			$\deg Q_0 = \dim_\Krull(A) - 1$;
		\item
			\label{thm:hilbser-3}
			$\deg Q_m \leq \dim_\Krull(A) - 1$ for all $m \in \{1, \ldots, W-1\}$.
	\end{enumerate}
\end{theorem}

For proofs, we refer the reader to \autocite[Theorem~4.4.3]{cohmac} or \cite[Lemmas~5.4.4, 5.4.5 and 5.4.6]{SegThese}.

To say that two functions $f,g$ satisfy $f=O(g)$ and $f \neq o(g)$, we write $f = \Os{g}$.
With this notation, \Cref{thm:hilbser} implies the following, for any algebra $A$ satisfying the hypotheses:
\begin{equation}
	\label{eqn:hilbfun-osharp}
	\HF_A(n) = \Os{n^{\dim_\Krull(A)-1}}.
\end{equation}
Together, \Cref{thm:hilbser} and \Cref{lem:dimkrull-ineq} directly imply:

\begin{corollary}
	\label{cor:dimj-hf}
	Under the hypotheses of \Cref{lem:dimkrull-ineq}, we have
	$
		\dim_k J_n =
		\HF_B(n) +
		\Oo{n^{\dim_\Krull(B)-2}}
	$.
\end{corollary}

\subsection{The dimension of $\gamma(H)$}
	\label{subsn:proof-gamma-eq-zeta}

We are ready to prove \Cref{thm:dim-gamma-omega}~: the Krull dimension of the stratum $\gamma(H)$ is one more than the splitting number~$\Omega(D_H)$.

\begin{proof}[Proof of \Cref{thm:dim-gamma-omega}]
	Let $R_{n,H} = (I_H \cap R^H)_n$ be the space spanned by components of group $H$ and of degree $n$.
	In order to apply \Cref{cor:dimj-hf}, we check that the hypotheses of \Cref{lem:dimkrull-ineq} hold when
	$
		B
		= R^H/(\Gamma_H \cap R^H)
	$
	and $J$ is the image of $I_H \cap R^H$ in $B$.
	First, $B$ is a finitely generated commutative graded $k$-algebra (cf. \Cref{subsn:ringcomp}) and $J$ is a homogeneous ideal of $B$.
	Note also that the ideal $\Gamma_H$ being radical implies that $B$ is reduced and hence $\sqrt{0} = 0$ in $B$.
	We consider an ideal $I$ of~$B$ such that $I \cap J = 0 $, and we want to show $I = 0$.
	Let $\tilde I$ be the inverse image of $I$ in $R^H$.
	Then, $I \cap J = 0$ rewrites as $\tilde I \cap I_H \subseteq \Gamma_H \cap R^H$.
	By \iref{prop:GammaH}{prop:GammaH-vi}, this implies $\tilde I \subseteq \Gamma_H$, i.e., $I = 0$.
	We have checked the hypotheses of \Cref{lem:dimkrull-ineq}.
	We obtain:
	\begin{align*}
		\dim_k\,J_n
		& =
		\HF_{R^H/(\Gamma_H \cap R^H)}(n)
		+
		\Oo{n^{\dim_\Krull(R^H/(\Gamma_H \cap R^H))-2}}
		&
		\text{by \Cref{cor:dimj-hf}}
		\\
		& =
		\Os{n^{\dim_\Krull(R^H/(\Gamma_H \cap R^H))-1}}
		&
		\text{by \Cref{eqn:hilbfun-osharp}}
		\\
		& =
		\Os{n^{\dim_\Krull \gamma(H) - 1}}
		&
		\text{by}
		\begin{cases}
			R^H/(\Gamma_H \cap R^H) \simeq R/\Gamma_H \\
			\text{\iref{prop:GammaH}{prop:GammaH-iii}}
		\end{cases}
	\end{align*}
	By \iref{prop:GammaH}{prop:GammaH-iv}, the intersection $\Gamma_H \cap I_H$ is contained in $\sqrt{I^*_H}$.
	As one cannot obtain nonzero terms involving components of group strictly larger than $H$ by taking powers of linear combinations of components of group exactly $H$, elements of $\sqrt{I^*_H} \cap I_H \cap R^H$ are exactly the nilpotent elements of $I_H \cap R^H$.
	Thus $(\Gamma_H \cap I_H \cap R^H)_n \subseteq (\sqrt{I^*_H} \cap I_H \cap R^H)_n = (\sqrt{0} \cap I_H \cap R^H)_n = N_{n,H}$.
	We have:
	\begin{align*}
		\dim_k\,J_n
		& =
		\dim_k (I_H \cap R^H)_n
		-
		\dim_k (\Gamma_H \cap I_H \cap R^H)_n
		&
		\text{by definition of $J$}
		\\
		& =
		\dim_k R_{n,H}
		+
		\Oo{\dim_k N_{n,H}}
		&
		\text{as }
		\left\{
			\begin{array}{ccc}
				(I_H \cap R^H)_n &
				= &
				R_{n,H}
				\\
				(\Gamma_H \cap I_H \cap R^H)_n &
				\subseteq &
				N_{n,H}
			\end{array}
		\right.
		\\
		& =
		\Os{n^{\Omega(D_H)}}
		+
		\Oo{n^{\Omega(D_H)-1}}
		&
		\text{by }
		\begin{cases}
			\text{\cite[Theorem~3.2~(ii)]{countcomp}} \\
			\text{\Cref{thm:ringcomp-nearly-reduced}}
		\end{cases}
		\\
		& =
		\Os{n^{\Omega(D_H)}}.
	\end{align*}
	Comparing the two estimates for $\dim_k \, J_n$, we obtain $\dim_\Krull \gamma(H) - 1 = \Omega(D_H)$.
\end{proof}

\subsection{The degree of $\gamma(H)$}
\label{subsn:degree-gamma}

In this subsection, we use the results of \cite[Section~4]{countcomp} to obtain more precise results concerning the strata $\gamma(H)$.
We come back to where the proof of \Cref{thm:dim-gamma-omega} ended, including all notation and hypotheses.
Since we now know that $\dim_\Krull \gamma(H) - 1 = \Omega(D_H)$, we obtain (again by expressing $\dim_k \, J_n$ in two different ways):
\begin{equation}
	\label{eqn:dimrnh-compared-to-hf}
	\dim_k R_{n,H}
	=
	\HF_{R^H/(\Gamma_H \cap R^H)}(n) + \Oo{n^{\Omega(D_H)-1}}.
\end{equation}

Assume that all non-factorizable elements of $\Comp_{\PC}(G,D,\xi)$ have the same degree $W$, and let~$N$ be their number.
Then, $\Proj\,R$ embeds in the projective space $\Pr^{N-1}(k)$ and we can compute the degree%
\footnote{
	A similar computation is possible when non-factorizable elements have different degrees -- one should then average the coefficients in front of $n^{\Omega(D_H)}$ of the polynomials $Q_0, \ldots, Q_{W-1}$ of \Cref{thm:hilbser} --, but there does not seem to be a standard notion of degree for subspaces of weighted projective spaces.
} of the closed subset $\bar{\gamma(H)}$ (the degree of a non-irreducible subset is the sum of the degrees of its irreducible components of maximal Krull dimension).
Note that we consider $\Proj$ instead of $\Spec$: we have quotiented out by the action of $k^{\times}$ and all dimensions are one less than previously computed.
In this case, we know that $\HF_{R^H/(\Gamma_H \cap R^H)}(n)$ and $\dim_k R_{n,H}$ vanish whenever~$n$ is not a multiple of~$W$, and coincide with two polynomials of degree $\Omega(D_H)$ when evaluated at large enough multiples of~$W$.
Moreover, \Cref{eqn:dimrnh-compared-to-hf} implies that the leading monomials of these two polynomials are equal:
\[
	\dim_k(R_{W n,H}) \sim \HF_{R^H/(\Gamma_H \cap R^H)}(W n) \sim \alpha \cdot (W n)^{\Omega(D_H)}
	\quad
	\text{for some } \alpha > 0.
\]
Classically, the number $\Omega(D_H)! \cdot \alpha$ is the degree of the closed subset $\bar{\gamma(H)}$ of $\Pr^{N-1}(k)$.
Recall that the group $H_2(H, c_H)$ (mentioned in \Cref{subsn:lifting-invariant}) is defined in \cite{wood} or \cite[Definition~4.4]{countcomp}.
The results of \cite{countcomp} give the value of $\alpha$, and consequently of the degree of $\bar{\gamma(H)}$, in various situations:
\begin{itemize}
	\item
		If $H$ is a non-splitter, then $\Omega(D_H)=0$ by definition, and $\alpha = \card{H_2(H,c_H)}$ by \cite[Proposition~4.11]{countcomp}.
		In this case, $\bar{\gamma(H)}$ (embedded in the projective space $\Pr^{N-1}(k)$) is of dimension zero: it is a union of finitely many points.
		The degree is precisely the number of these points: there are $\card{H_2(H,c_H)}$ of them.
	
		\begin{remark}
			\label{rk:nb-lines-gamma-nonsplitt}
			If one sees $\bar{\gamma(H)}$ as embedded in affine space $\mathbb{A}^N(k)$ (as we have done in other sections), it is a union of $\card{H_2(H,c_H)}$ lines going through the origin $0$.
		\end{remark}
	\item
		Assume that $D$ consists of a single conjugacy class $c$ of $G$ and $\xi(c) = 1$.
		Let $\ord(c)$ be the order of any element contained in the conjugacy class $c$.
		Let $s$ be the number of conjugacy classes of~$H$ contained in $c \cap H$, so that $\Omega(D_H)=s-1$.
		By \cite[Corollary~4.14]{countcomp} and the comments underneath it, we have:
		\[
			\alpha = \frac{\ord(c)\card{H_2(H, c_H)}}{\card{H^{\ab}}(s-1)!}.
		\]
		The degree of $\bar{\gamma(H)}$ is then given by:
		\[
			\frac{
				\ord(c)
				\cdot
				\card{H_2(H, c_H)}
			}{
				\card{H^{\ab}}
			}.
		\]
\end{itemize}

\section{Explicit description of the variety of components}
\label{sn:spectrum-2}

This section is organized as follows:
\begin{itemize}
	\item
		In \Cref{subsn:line-in-gamma}, we describe a curve which is contained in each stratum $\gamma(H)$ (\Cref{thm:curve-in-gamma}).
		In some situations, we prove that this curve is all of $\gamma(H)$.
	\item
		In \Cref{subsn:free-factor}, we define the notion of a free factor family of subgroups (\Cref{defn:free-family-subgp} and \Cref{defn:factor-family}).
		In \Cref{prop:carac-factor-free-tensor}, we state key properties of this notion.
	\item
		In \Cref{subsn:factored-ringcomp}, we describe the stratum corresponding to the product of a free factor family in terms of the strata associated to its factors (\Cref{prop:desc-stratum-product}).
		We then use this to obtain a complete description of the variety of components under specific hypotheses (\Cref{thm:desc-spectrum}).
	\item
		In \Cref{ssn:desc-varcomp-coord}, we rephrase the description explicitly in terms of coordinates (\Cref{thm:desc-spectrum-coords}).
	\item
		In \Cref{ssn:symgp}, we apply \Cref{thm:desc-spectrum-coords} to obtain a complete description of the variety of components in the classical case of symmetric groups, using the results of \cite[Chapter~6]{SegThese}.
\end{itemize}

\subsection{A curve in each stratum}
\label{subsn:line-in-gamma}

Fix a nontrivial $D$-generated subgroup $H$ of $G$.
In this subsection, we describe a curve which is contained in the stratum $\gamma(H)$, and give a criterion for it to be all of $\gamma(H)$.
We start by defining ideals $A^\circ_H$ and $A_H$:

\begin{definition}
	We let $A^\circ_H$ be the homogeneous ideal of $R$ generated by the differences $m-m'$, for pairs $(m,\,m')$ of components of same degree whose group is contained in $H$.
	We also define $A_H \eqdef A^\circ_H + J^*_H$.
\end{definition}

We let $p_1, \ldots, p_M$ be the non-factorizable components whose group is contained in $H$.
In the quotient $R/A_H$, components whose group is not contained in $H$ are made to vanish, and other components are made to be equal whenever their degrees match.
In other words, instead of being the monoid algebra of the monoid of components $\Comp_{\PC}(G , \, D , \, \xi)$, the quotient $R/A_H$ turns the ring of components into the monoid algebra of the submonoid of $\Zpos$ formed of integers which are the degree of some component whose group is contained in $H$.
A more concrete description of the ideal~$A_H$, of the quotient $R/A_H$, and of the closed subset $V(A_H) \subseteq \Spec\,R$ is given by the following theorem:

\begin{theorem}
	\label{thm:curve-in-gamma}
	let $p_1, \ldots, p_M$ be the non-factorizable components whose group is contained in $H$, let~$d(1), \ldots, d(M)$ be their respective degrees, and let $D$ be the greatest common divisor of the integers~$d(i)$.
	Then:
	\begin{enumerate}[label=(\roman*)]
		\item
			\label{thm:curve-in-gamma-i}
			The quotient $R/A_H$ is isomorphic to the graded subalgebra
			$
				k[X^{d(1)}, \ldots, X^{d(M)}]
			$
			of $k[X]$.
		\item
			$A_H$ is a homogeneous prime ideal of $R$, and the Krull dimension of $R/A_H$ is $1$.
			Hence, $V(A_H)$ is a $1$-dimensional irreducible closed subset of $\Spec\,R$, contained in $V(J^*_H)$.
		\item
			We have $\sqrt{A_H + I_H} = \varpi$ and $V(A_H) \setminus \{\varpi\} \subseteq \gamma(H)$.
		\item
			\label{thm:curve-in-gamma-iv}
			If there is at most one component of group $H$ for each degree,%
			\footnote{
				This hypothesis implies that $H$ is a non-splitter and $H_2(H,c_H) = 1$ by \cite[Theorem~1.4]{countcomp}.
				This result is an <<~effective~>> version of the case $H_2(H,c_H) = 1$ of \Cref{rk:nb-lines-gamma-nonsplitt}.
			}
			then $V(A_H) = \gamma(H) \cup \{\varpi\}$.
		\item
			\label{thm:curve-in-gamma-v}
			Assume that $k$ is algebraically closed and let $p_{M+1}, \ldots, p_N$ be the non-factorizable components whose group is not contained in $H$.
			Embed the set of $k$-points of $\Spec\,R$ into $k^N$ as in \Cref{subsubsn:affine-embed}.
			Then, the $k$-points of $V(A_H)$ form a ``weighted'' line, consisting of points of the following form, for some $\lambda \in k$:
			\[
				(
					\underbrace{
						\lambda^{d(1)/D}, \lambda^{d(2)/D}, \ldots, \lambda^{d(M)/D}
					}_{\substack{
						\text{non-factorizable components}\\
						\text{of group } \subseteq H
					}}
					\,\, , \,\,
					\underbrace{
						0, \ldots, 0
					}_{
						\text{other non-factorizable components}
					}
				).
			\]
			In particular, under the hypothesis of \ref{thm:curve-in-gamma-iv}, the $k$-points of $\gamma(H)$ are points of the form above for $\lambda \in k^{\times}$.
	\end{enumerate}
\end{theorem}

\begin{proof}
	~
	\begin{enumerate}[label=(\roman*)]
		\item
			The image of a component $m$ (whose group is contained in $H$) in the quotient $R/A_H$ is entirely determined by the degree of $m$, so that we can denote its image by $X^{\deg(m)}$.
			Applying this to each non-factorizable component whose group is contained in~$H$ shows that $R/A_H \simeq R^H/(A^\circ_H \cap R^H)$ is a quotient of $k[X^{d(1)}, \ldots, X^{d(M)}]$.
			Relations between the non-factorizable components are generated by equalities of the form $m_1 \cdots m_u = m'_1 \cdots m'_u$ for non-factorizable components $m_1, \ldots, m_u, m'_1, \ldots, m'_u$ satisfying $\deg(m_1) + \cdots + \deg(m_u) = m'_1 + \cdots + \deg(m'_u)$.
			But such relations are automatically satisfied in $k[X^{d(1)}, \ldots, X^{d(M)}]$, so that $R/A_H \simeq k[X^{d(1)}, \ldots, X^{d(M)}]$.
		\item
			The quotient $R/A_H \simeq k[X^{d(1)}, \ldots, X^{d(M)}]$ is a graded subalgebra of the domain $k[X]$ and hence is integral.
			This proves that $A_H$ is a prime ideal of $R$.
			Moreover, the Krull dimension of $R/A_H$ equals the transcendence degree of its field of fractions $\Frac(R/A_H) \simeq \Frac(k[X^{d(1)}, \ldots, X^{d(M)}]) = k(X^D)$, which is $1$.
			The claims concerning $V(A_H)$ follow.
		\item
			Since $H$ is $D$-generated, the product $p_1 \cdots p_M$ has group exactly $H$ and hence belongs to $I_H$.
			The image of that element in $R/A_H$ is $X^{d(1)+\cdots+d(M)}$, and thus $R/(A_H + I_H)$ is a quotient of the algebra $k[X^{d(1)}, X^{d(2)}, \ldots, X^{d(M)}]/(X^{d(1)+\cdots+d(M)})$, in which all homogeneous elements of positive degree are nilpotent.
			Therefore, the reduced quotient of $R/(A_H + I_H)$ is~$k$, which implies $\sqrt{A_H + I_H} = \varpi$.
			It follows that $V(A_H) \cap V(I_H) = \{\varpi\}$, i.e., that $V(A_H) \setminus \{\varpi\}$ is contained in $V(J^*_H) \setminus V(I_H) = \gamma(H)$.
		\item
			We have shown that $V(A_H) \subseteq \gamma(H) \cup \{\varpi\}$, hence it remains only to show that $V(A_H) \supseteq \gamma(H)$, i.e., that $V(A_H) \cup V(I_H) = V(I^*_H)$.
			It suffices to show that $A_H \cap I_H = I^*_H$.
			Let $S$ be the submonoid of $\Zpos$ formed of integers which are the degree of some component of group $H$, and let $F_k$ be the only component of group $H$ of degree $k$ for $k \in S$.
			An element of $I_H$ is of the form:
			\[
				x
				=
				\sum_{k \in S}
					\lambda_k F_k
				+
				\underbrace{
					x_{\supsetneq H}
				}_{\in I^*_H}.
			\]
			The image of~$x$ in $R/A_H$ (seen as a subring of $k[X]$ via point \ref{thm:curve-in-gamma-i}) is $\sum_{k \in S} \lambda_k X^k$.
			So, $x$ belongs to~$A_H$ exactly when the coefficients $\lambda_k$  vanish for all $k \in S$, if and only if $x = x_{\supsetneq H}$ belongs to~$I^*_H$.
		\item
			A point $(x_1,\ldots,x_N) \in (\Spec\,R)(k)$ is in $V(A_H)(k)$ if $x_{M+1} = \ldots = x_N = 0$ (because~$A_H$ contains~$J^*_H$), and if two products of the coordinates $x_i$ are equal whenever the sums of the degrees of the corresponding non-factorizable components $p_i$ match.
			Using Bézout's identity, fix integers $a_1, \ldots, a_M \in \Z$ such that:
			\[
				D
				=
				\sum_{i=1}^M
					a_i d(i).
			\]
			For each $i \in \{1,\ldots,M\}$, we have in $R/A_H$:
			\[
				p_i
				\left(
					\prod_{j \text{ s.t. } a_j \leq 0}
						p_j^{-a_j}
				\right)^{d(i)/D}
				=
				\left(
					\prod_{j  \text{ s.t. } a_j \geq 0}
						p_j^{a_j}
				\right)^{d(i)/D}
			\]
			because both sides are components of same degree, since $d(i) = \frac{d(i)}{D} \cdot \sum_j a_j d(j)$.
			Hence, $(x_1,\ldots,x_M,0,\ldots,0) \in V(A_H)(k)$ means that we have, for all $i \in \{1,\ldots,M\}$:
			\[
				x_i =
				\left(
					\prod_{j} x_j^{a_j}
				\right)^{d(i)/D}
				=
				\lambda^{d(i)/D}
			\]
			where $\lambda = \prod_j x_j^{a_j}$ does not depend on $i$.
			So, points of $V(A_H)(k)$ are of the form:
			\[
				(\lambda^{d(1)/D}, \lambda^{d(2)/D},\ldots,\lambda^{d(M)/D},0,\ldots,0)
				\text{ with } \lambda \in k.
			\]
			The coordinates of such a point satisfy the equalities defining $R$, since these are equalities between components of same degree and same group.
			\qedhere
	\end{enumerate}
\end{proof}

\begin{remark}
	The principle of \Cref{thm:curve-in-gamma} is to forget everything about a given component besides its degree and the fact that its group is contained in $H$.
	Finer descriptions of $\gamma(H)$ may be obtained by forgetting less information about these components, for example by replacing $A_H$ by the ideal generated by the differences $m-m'$ between two components of same group $H$, same $H$-multidiscriminant (\Cref{defn:multidisc}) and same image in $H_2(H, c_H)$ (cf. \Cref{subsn:lifting-invariant}), i.e., with the same lifting invariant in the sense of \cite{wood}.
\end{remark}

\subsection{Free families and factor families}
\label{subsn:free-factor}

\begin{definition}
	\label{defn:free-family-subgp}
	A list $H_1,\ldots,H_k$ of subgroups of $G$ is a \emph{free family} if the two following conditions are met:
	\begin{itemize}
		\item
			For all disjoint subsets $A, B$ of $\{1,\ldots,k\}$, we have $\gen{(H_i)_{i \in A}} \cap \gen{(H_j)_{j \in B}} = 1$.
		\item
			Elements of $H_i$ commute with elements of $H_j$ when $i \neq j$.
	\end{itemize}
\end{definition}

\begin{definition}
	\label{defn:prod-free-family-subgp}
	If $H_1, \ldots, H_k$ is a free family of subgroups of $G$, the (Zappa-Szép) \emph{product} of this family is the subgroup $\gen{H_1, \ldots, H_k}$ of $G$.
\end{definition}

As an abstract group, the product of a free family $H_1 ,\ldots, H_k$ is isomorphic to the direct product of the groups $H_i$.
We now fix a subgroup $H \in \Sub_{G,D}$.

\begin{definition}
	\label{defn:factor-family}
	A \emph{factor family} of $H$ is a list $H_1,\ldots,H_k$ of $D$-generated subgroups of~$H$ such that every non-factorizable component whose group is contained in $H$ has its group contained in some $H_i$.
\end{definition}

\begin{proposition}
	\label{prop:generated-by-factor-family}
	Let $H_1,\ldots,H_k$ be a factor family of $H$.
	Then $H_1,\ldots,H_k$ generate~$H$.
\end{proposition}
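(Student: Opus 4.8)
The plan is to exploit the existence of a component whose group is exactly $H$ together with its factorization into non-factorizable pieces. One inclusion is immediate: since each $H_i$ is contained in $H$, we get $\gen{H_1,\ldots,H_k} \subseteq H$, so the content of the statement is the reverse inclusion $H \subseteq \gen{H_1,\ldots,H_k}$.

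For the reverse inclusion, I would first dispose of the trivial case $H = 1$, and then assume $H \neq 1$. Since $H \in \Sub_{G,D}$, \Cref{prop:all-dgen-are-groups-of-comps} provides a component $m \in \Comp(G,\xi)$ whose group is $H$. By \Cref{lem:comp-is-fingen}, $m$ factors as a product $m = m_1 m_2 \cdots m_r$ of non-factorizable components. The key observation is that the group of a product of components is the subgroup generated by the groups of its factors: this follows directly from \Cref{defn:monoid-comp}, because a representing tuple of $m_1 \cdots m_r$ is the concatenation of representing tuples of the $m_j$, so $\gen{m} = \gen{\gen{m_1},\ldots,\gen{m_r}}$. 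In particular, the group $\gen{m_j}$ of each non-factorizable factor is a subgroup of $\gen{m} = H$.

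Hence each $m_j$ is a non-factorizable component whose group is contained in $H$, so by the very definition of a factor family its group $\gen{m_j}$ is contained in some $H_{i(j)}$, and therefore contained in $\gen{H_1,\ldots,H_k}$. Combining these, $H = \gen{m} = \gen{\gen{m_1},\ldots,\gen{m_r}} \subseteq \gen{H_1,\ldots,H_k}$, which together with the first inclusion gives $H = \gen{H_1,\ldots,H_k}$. I do not anticipate a genuine obstacle here; the only point requiring a moment's care is the elementary fact that the group of a product of components is the join of the groups of the factors (and the bookkeeping of the degenerate cases, such as the empty tuple), but all of this is routine given the results already established.
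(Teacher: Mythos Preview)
Your proof is correct and follows essentially the same approach as the paper's: take a component of group $H$ via \Cref{prop:all-dgen-are-groups-of-comps}, factor it into non-factorizable components via \Cref{lem:comp-is-fingen}, observe that each factor's group lies in some $H_i$ by the factor-family hypothesis, and conclude. You are slightly more careful in separating out the trivial case and in justifying that the group of a product is the join of the groups of the factors, but the argument is the same.
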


\begin{proof}
	Use \Cref{prop:dgen-are-monodromy-groups} to fix a component $m$ of group $H$, and write $m$ as a product of non-factorizable components $m = m_1 \cdots m_r$.
	Since $H_1,\ldots,H_k$ is a factor family of $H$, each factor $m_i$ has its group contained in some $H_j$.
	Hence $H = \gen{m} = \gen{\gen{m_1},\ldots,\gen{m_r}}$ is contained in $\gen{H_1,\ldots,H_k}$.
\end{proof}

\begin{proposition}
	\label{prop:carac-factor-free-tensor}
	Let $H_1,\ldots,H_k$ be a list of subgroups of $H$.
	Denote by $\Phi$ the following morphism:
	\[
		\Phi :
		\left\lbrace
		\begin{matrix}
			R^{H_1} \otimes \cdots \otimes R^{H_k} &
			\to &
			R^H
			\\
			m_1 \otimes \cdots \otimes m_k &
			\mapsto &
			m_1 \cdots m_k
		\end{matrix}
		\right.
		.
	\]

	Then:
	\begin{enumerate}[label=(\roman*)]
		\item
			\label{prop:carac-factor-free-tensor-i}
			$H_1,\ldots,H_k$ is a factor family if and only if $\Phi$ is surjective.
		\item
			\label{prop:carac-factor-free-tensor-ii}
			If $H_1,\ldots,H_k$ is free, then $\Phi$ is injective.
		\item
			\label{prop:carac-factor-free-tensor-iii}
			Assume that $H_1, \ldots, H_k$ is a free factor family of $H$.
			Then:
			\[
				\Phi^{-1}(I_H \cap R^H) = (I_{H_1} \cap R^{H_1}) \otimes \cdots \otimes (I_{H_k} \cap R^{H_k}).
			\]
	\end{enumerate}
\end{proposition}

\begin{proof}
	~
	\begin{enumerate}[label=(\roman*)]
	\item Let us prove point (i).
		\begin{itemize}
			\item[$(\Leftarrow)$]
				Assume that $\Phi$ is surjective and consider a non-factorizable component $m \in R^H$.
				Since~$m$ is in the image of $\Phi$ and is a component (not a sum thereof), it equals a product $m_1 \cdots m_k$ with $m_i$ a component in $R^{H_i}$.
				Since $m$ is non-factorizable, we have $m=m_i$ for some $i$ and thus $\gen{m} \subseteq H_i$.

			\item[$(\Rightarrow)$]
				Conversely, assume $H_1,\ldots,H_k$ is a factor family.
				To prove that $\Phi$ is surjective, it is enough to show that its image contains any component $m$ whose group is contained in~$H$.
				Decompose $m$ as a product $m_1 \cdots m_r$ of non-factorizable components.
				By hypothesis, each factor~$m_j$ is contained in some $H_{\psi(j)}$.
				For $i \in \{1,\ldots,k\}$, let $m'_i$ be the product of the components~$m_j$ such that $\psi(j) = i$.
				Then $m = \Phi(m'_1\otimes\cdots\otimes m'_k)$.
		\end{itemize}

	\item
		Assume that the family $(H_i)$ is free.
		Since $\Phi$ maps pure tensors of components to components, and since components form a basis, it suffices to check that $\Phi$ maps distinct pure tensors of components to distinct components.
		Thus, we assume that there are two tuples $(m_1, \ldots, m_k), (m'_1, \ldots, m'_k)$ of components, with $\gen{m_i}$ and $\gen{m'_i}$ contained in $H_i$, such that:
		\[
			\prod_{j=1}^k
				m_j
			=
			\prod_{j=1}^k
				m'_j
			.
		\]
		Consider a braid relating two tuples representing these two components, and decompose it into elementary braids.
		Since the subgroups $H_i$ commute with each other, elementary braids swapping two elements from different subgroups $H_i$ do not change these elements, and do not affect the position of these elements among elements of the same subgroup.
		Hence, fixing some $i \in \{1,\ldots,k\}$ and retaining only elementary braids which do exchange elements of the same subgroup $H_i$ yields a braid relating $m_i$ and $m'_i$, proving that $m_i = m'_i$.
		Therefore, $\Phi$ is injective.
	\item
		First note that $\Phi$ is an isomorphism by the previous points.
		That $\Phi((I_{H_1} \cap R^{H_1}) \otimes \cdots \otimes (I_{H_k} \cap R^{H_k})) \subseteq I_H \cap R^H$ follows immediately from the fact that a product of components $m_1 \cdots m_k$ with $\gen{m_i} = H_i$ has group $\gen{H_1, \ldots, H_k} = H$ (cf. \Cref{prop:generated-by-factor-family}).

		Conversely, consider nonzero constants $\lambda_j \in k$ and components $m_{i,j}$ with $\gen{m_{i,j}} \subseteq H_i$ (such that no two tuples $(m_{i,j})_{1 \leq i \leq k}$ are equal for distinct values of $j$), such that:
		\[
			\Phi\!\left(
				\sum_j
					\lambda_j \cdot
					(m_{1,j} \otimes \cdots \otimes m_{k,j})
			\right)
			\in
			I_H \cap R^H.
		\]
		Then, each component $m_{1,j} \cdots m_{k,j}$ has group exactly $H$.
		Fix a $j$.
		The groups~$\gen{m_{i,j}}$ for $i \in \{1, \ldots, k\}$, being subgroups of $H_i$, form a free family of subgroups of $H$ and hence the group generated by $m_{1,j} \cdots m_{k,j}$ is isomorphic to the direct product $\prod_{i=1}^k \gen{m_{i,j}}$.
		We have $\card{H} = \prod_{i=1}^k \card{H_i}$ but also $\card{H} = \card{\gen{m_{1,j} \cdots m_{k,j}}} = \prod_{i=1}^k \card{\gen{m_{i,j}}}$ with $\card{\gen{m_{i,j}}} \leq \card{H_i}$, which implies that the component $m_{i,j}$ has group exactly $H_i$, for each $i \in \{1,\ldots,k\}$.
		Therefore, the element
		$
			\sum_j
				\lambda_j \cdot
				(m_{1,j} \otimes \cdots \otimes m_{k,j})
		$
		belongs to $(I_{H_1} \cap R^{H_1}) \otimes \cdots \otimes (I_{H_k} \cap R^{H_k})$, which proves the claim.
		\qedhere
	\end{enumerate}
\end{proof}

Finally, we give a group-theoretic criterion to identify some factor families:

\begin{proposition}
	\label{prop:criterion-factor-family}
	Assume that $\card{D}=1$ and $\xi=1$, i.e., we consider a single conjugacy class $c$.
	Assume that~$H$ is a subgroup of $G$ generated by subgroups $H_1,\ldots,H_k$ such that:
	\begin{enumerate}[label=(\roman*)]
		\item
			\label{prop:criterion-factor-family-hyp1}
            For all $i \in \{1,\ldots,k\}$, the subgroup $H_i$ has a trivial intersection with $\gen{(H_j)_{j \neq i}}$.
		\item
			\label{prop:criterion-factor-family-hyp2}
            $
                \displaystyle
                c \cap H
                =
                \bigsqcup_{i=1}^k
                    (c \cap H_i)
            $.
	\end{enumerate}

	Then $H_1,\ldots,H_k$ is a factor family of $H$.
\end{proposition}

\begin{proof}
	For $i \in \{1,\ldots,k\}$, let $c_i = c \cap H_i$.
	Consider a non-factorizable component $m$ whose group is contained in $H$.
	By \ref{prop:criterion-factor-family-hyp2}, and using the fact that braids can permute conjugacy classes freely, we may choose a tuple $\gbar$ representing the component $m$ of the form:
    \[
        \gbar
		=
		(g_{1,1},\ldots,g_{1,n(1)},\ldots,g_{k,1},\ldots,g_{k,n(k)})
		\qquad
		\text{with }
		g_{i,j} \in c_i.
    \]
	Let $\pi_i = g_{i,1} \cdots g_{i,n(i)} \in H_i$.
	We have $\pi_1 \cdots \pi_k = 1$ and thus $\pi_i = \left ( \pi_{i+1} \cdots \pi_k \pi_1 \cdots \pi_{i-1} \right ) ^{-1} \in \gen{(H_j)_{j\neq i}}$.
	By \ref{prop:criterion-factor-family-hyp1}, this implies $\pi_i=1$.
	Therefore, all the tuples $\gbar_i = (g_{i,1},\ldots,g_{i,n(i)}) \in c_i^{n(i)}$ define components.
	Since~$m$ is non-factorizable, it equals one of its factors and thus $\gen{m}$ is contained in some~$H_i$.
\end{proof}

\subsection{The stratum associated to the product of a free factor family}
\label{subsn:factored-ringcomp}

We fix a subgroup $H \in \Sub_{G,D}$.

\subsubsection{The set $\tilde\gamma(H)$.}

\begin{definition}
	We denote by $\tilde\gamma(H)$ the open subset of $\Spec\,R^H$ obtained as the complement of $V(I_H \cap R^H)$.
	In other words, $\tilde\gamma(H)$ is the set of all prime ideals of $\Spec \, R^H$ for which some component~$m$ of group exactly $H$ does not belong to $p$.
\end{definition}

We denote by $\pi_H$ the dominant map $\Spec\,R \to \Spec\,R^H$ induced by the inclusion $R^H \hookrightarrow R$, and by $\iota_H$ the embedding $\Spec\,R^H \to \Spec\,R$ induced by the surjection $R \twoheadrightarrow R/J^*_H \simeq R^H$.
Since the composition $R^H \hookrightarrow R \twoheadrightarrow R/J^*_H \simeq R^H$ is the identity map, we have the equality:
\begin{equation}
	\label{eqn:iota-then-pi}
	\pi_H \circ \iota_H = \id_{\Spec \, R^H}
\end{equation}

The sets $\gamma(H)$ and $\tilde\gamma(H)$ determine each other, as shown below:

\begin{proposition}
	\label{prop:gamma-and-tilde-gamma}
	The sets $\gamma(H)$ and $\tilde\gamma(H)$ are related by the following equalities:
	\begin{enumerate}[label=(\roman*)]
		\item
			\label{prop:gamma-and-tilde-gamma-i}
			$
				\gamma(H) =
				\iota_H(\tilde\gamma(H))
			$
		\item
			\label{prop:gamma-and-tilde-gamma-ii}
			$
				\tilde\gamma(H) = \pi_H(\gamma(H))
			$
		\item
			\label{prop:gamma-and-tilde-gamma-iii}
			$
				\gamma(H)
				=
				V(J^*_H)
				\cap
				\pi_H^{-1}(\tilde\gamma(H))
			$
	\end{enumerate}
\end{proposition}

\begin{proof}
	~
	\begin{enumerate}[label=(\roman*)]
		\item
			\begin{itemize}
				\item[$(\subseteq)$]
					Let $p \in \gamma(H)$.
					Then, $p$ is a prime ideal of $R$ containing $J^*_H$, and not containing some component~$m$ of group $H$.
					Since $p$ contains $J^*_H$, we have $p = J^*_H + (p \cap R^H) = \iota_H(p \cap R^H)$.
					Moreover, $p \cap R^H$ is a prime ideal of $R^H$ not containing $m$, and thus $p \cap R^H \in \tilde\gamma(H)$.
				\item[$(\supseteq)$]
					Let $p \in \tilde\gamma(H)$.
					Then, $p$ is a prime ideal of $R^H$ not containing some component $m$ of group~$H$.
					The prime ideal $\iota_H(p) = p + J^*_H$ of $R$ contains~$J^*_H$ but does not contain $m$, and thus it belongs to $\gamma(H)$.
			\end{itemize}
		\item
			Follows from \ref{prop:gamma-and-tilde-gamma-i} by applying $\pi_H$ to both sides, and using \Cref{eqn:iota-then-pi}.
		\item
			The inclusion $\subseteq$ follows from \Cref{prop:gamma-with-J} and from \ref{prop:gamma-and-tilde-gamma-ii}.
			Conversely, let $p \in 
			V(J^*_H)
			\cap
			\pi_H^{-1}(\tilde\gamma(H))$.
			Since $p$ contains $J^*_H$, we have $p = J^*_H + (p \cap R^H)$, i.e. $p = J^*_H + \pi_H(p)$.
			By hypothesis, $\pi_H(p) \in \tilde\gamma(H)$ and hence there is some component $m$ of group $H$ which is not contained in $\pi_H(p) = p \cap R^H$.
			But then $m$ is also not contained in $p$ and thus $p \in \gamma(H)$.
			\qedhere
	\end{enumerate}
\end{proof}

\begin{remark}
	We can interpret \Cref{prop:gamma-and-tilde-gamma} more concretely when $k$ is algebraically closed and we focus on $k$-points.
	Let $p_1, \ldots, p_M$ be the non-factorizable components whose group is contained in~$H$.
	Then, \Cref{prop:gamma-and-tilde-gamma} implies that the $k$-points of $\gamma(H)$ are exactly the points of the form
	\[
		(
			\underbrace{
				x_1, \, \ldots, \, x_M
			}_{\substack{
				\text{coordinates corresponding to}\\
				p_1, \, \ldots, \, p_M
			}}
			\, , \,
			\underbrace{
				0, \, \ldots, \, 0
			}_{\substack{
				\text{coordinates corresponding to}\\
				\text{other non-factorizable components}
			}}
		)
	\]
	where $(x_1, \ldots, x_M)$ is a $k$-point of $\tilde\gamma(H)$.
	On the other hand, the $k$-points of $\pi_H^{-1}(\tilde\gamma(H))$ are those whose coordinates $(x_1, \ldots, x_M)$ form a $k$-point of $\tilde\gamma(H)$, with no additional restriction on the other coordinates besides the fact that the point has to be a $k$-point of $\Spec \, R$.
\end{remark}

\subsubsection{Free factor families and strata.}

We now fix a free factor family $H_1,\ldots,H_k$ of $H$.
Our goal is to relate the stratum associated to $H$ to the strata associated to its factors $H_1, \ldots, H_k$.
The first tool is the following proposition, which rephrases parts of \Cref{prop:carac-factor-free-tensor} geometrically:

\begin{proposition}
	The natural morphism $\Phi : R^{H_1} \otimes \cdots \otimes R^{H_k} \to R^H$ is an isomorphism, i.e., the map $\Spec \, R^H \to \prod_{i=1}^k \Spec \, R^{H_i}$, which we also denote by $\Phi$, is a homeomorphism (even an isomorphism of schemes).
	Moreover, we have the following equality:
	\begin{equation}
		\label{eqn:tilde-gamma-decompose}
		\tilde\gamma(H)
		=
		\Phi^{-1}\!\left(
			\prod_{i=1}^k
				\tilde\gamma(H_i)
		\right).
	\end{equation}
\end{proposition}

\begin{proof}
	That $\Phi$ is an isomorphism follows from points \ref{prop:carac-factor-free-tensor-i} and \ref{prop:carac-factor-free-tensor-ii} of \Cref{prop:carac-factor-free-tensor}.
	\Cref{eqn:tilde-gamma-decompose} follows directly from \iref{prop:carac-factor-free-tensor}{prop:carac-factor-free-tensor-iii} by rephrasing it in terms of closed subsets and taking complements.
\end{proof}

\begin{proposition}
	\label{prop:desc-stratum-product}
	We have the following description of $\gamma(H)$:
	\[
		\gamma(H)
		=
		V(J^*_H)
		\cap
		\bigcap_{i=1}^k
			\pi_{H_i}^{-1}(\tilde\gamma(H_i))
		=
		V(J^*_H)
		\cap
		\bigcap_{i=1}^k
			\pi_{H_i}^{-1}(\pi_{H_i}(\gamma(H_i)))
		.
	\]
\end{proposition}

\begin{proof}
	We compute $\gamma(H)$ in the following way:
	\begin{align*}
		\gamma(H)
		& =
		V(J^*_H) \cap \pi_H^{-1}(\tilde\gamma(H))
		&
		\text{
			by
			\iref{prop:gamma-and-tilde-gamma}{prop:gamma-and-tilde-gamma-iii}
		}
		\\
		& =
		V(J^*_H) \cap
		\pi_H^{-1}\!\left(
			\Phi^{-1}\!\left(
				\prod_{i=1}^k
					\tilde\gamma(H_i)
			\right)
		\right)
		&
		\text{
			by
			\Cref{eqn:tilde-gamma-decompose}
		}.
	\end{align*}
	Define $\pi_{H_i \to H}$ as the dominant map $\Spec\, R^H \to \Spec\, R^{H_i}$ induced by the inclusion $R^{H_i} \hookrightarrow R^H$, so that $\Phi = \pi_{H_1 \to H} \times \cdots \times \pi_{H_k \to H}$.
	The set
	$
		\prod_{i=1}^k
			\tilde\gamma(H_i)
	$
	can be tautologically described as the subset of $\prod_i \Spec \, R^{H_i}$ formed of elements whose projection in $\Spec \, R^{H_i}$ belongs to $\tilde\gamma(H_i)$ for each $i \in \{1, \ldots, k\}$.
	Therefore, the set
	$
		\Phi^{-1}\!\left(
			\prod_{i=1}^k
				\tilde\gamma(H_i)
		\right)
	$
	is the subset of $\Spec \, R^H$ formed of elements~$x$ whose projection in $\Spec \, R^{H_i}$ (namely $\pi_{H_i \to H} (x)$) belongs to $\tilde\gamma(H_i)$ for all $i \in \{1, \ldots, k\}$.
	This yields the equality:
	\[
		\Phi^{-1}\!\left(
			\prod_{i=1}^k
				\tilde\gamma(H_i)
		\right)
		=
		\bigcap_{i=1}^k
			\pi_{H_i \to H}^{-1}(\tilde\gamma(H_i)).
	\]
	Plugging this in the expression of $\gamma(H)$ obtained above, we get:
	\begin{align*}
		\gamma(H)
		& =
		V(J^*_H)
		\cap
		\pi_H^{-1}\!\left(
			\bigcap_{i=1}^k
				\pi_{H_i \to H}^{-1}(\tilde\gamma(H_i))
		\right)
		\\
		& =
		V(J^*_H)
		\cap
		\bigcap_{i=1}^k
			\pi_{H_i}^{-1}(\tilde\gamma(H_i))
		&
		\text{because }
		\pi_{H_i \to H} \circ \pi_H = \pi_{H_i}.
	\end{align*}
	The second equality follows using \iref{prop:gamma-and-tilde-gamma}{prop:gamma-and-tilde-gamma-ii}.
\end{proof}


\subsubsection{Complete description of the spectrum.}

We make the following definition:

\begin{definition}
	\label{defn:factored-splitter}
	A subgroup $H \in \Sub_{G,D}$ is a \emph{factored splitter} if there exists a free factor family of~$H$ of size at least $2$.
\end{definition}




\Cref{prop:desc-stratum-product} then implies the following theorem:

\begin{theorem}
	\label{thm:desc-spectrum}
	Assume that:
	\begin{itemize}
		\item
			Every nontrivial $H \in \Sub_{G,D}$ is either a non-splitter or a factored splitter.
		\item
			For every non-splitter $H \in \Sub_{G,D}$, there is at most one component of group $H$ of each degree.
	\end{itemize}
	Under these hypotheses, we describe $\gamma(H)$ for every $H \in \Sub_{G,D}$:
	\begin{enumerate}[label=(\roman*)]
		\item
			\label{thm:desc-spectrum-i}
			$\gamma(1) = \{\varpi\}$.
		\item
			\label{thm:desc-spectrum-ii}
			If $H$ is a non-splitter, $\gamma(H)$ is the curve $V(A_H) \setminus \{\varpi\}$ from \Cref{thm:curve-in-gamma}.
		\item
			\label{thm:desc-spectrum-iii}
			Otherwise, $H$ is a factored splitter, and we can write $H = H_1 \times \ldots \times H_k$ where the subgroups $H_1,\ldots,H_k$ form a free factor family of non-splitters.
			Then:
			\[
				\gamma(H)
				=
				V(J^*_H)
				\cap
				\bigcap_{i=1}^k
					\pi_{H_i}^{-1}\Big(
						\pi_{H_i}\big(
							V(A_{H_i})
							\setminus
							\{\varpi\}
						\big)
					\Big).
			\]
	\end{enumerate}
	Since, by \Cref{prop:stratification-of-spec}, the strata $\gamma(H)$ cover $\Spec\,R$, this yields a complete description of $\Spec\,R$.
\end{theorem}

\begin{proof}
	Point \ref{thm:desc-spectrum-i} is clear.
	Consider a nontrivial subgroup $H \in \Sub_{G,D}$, and choose a maximal free factor family $H_1,\ldots,H_k$ of $H$.
	For all $i \in \{1,\ldots,k\}$, the subgroup $H_i$ is a non-splitter, as otherwise~$H_i$ is a factored splitter (by the first hypothesis) and we can construct a longer free factor family, contradicting maximality.
	By \Cref{prop:desc-stratum-product}, we can therefore reduce the proof of \ref{thm:desc-spectrum-iii} to the proof of \ref{thm:desc-spectrum-ii}.
	Finally, point \ref{thm:desc-spectrum-ii} follows from \iref{thm:curve-in-gamma}{thm:curve-in-gamma-iv} and from the second hypothesis.
\end{proof}

\subsection{Complete description of the variety of components, in coordinates}
\label{ssn:desc-varcomp-coord}

In this subsection, we rephrase \Cref{thm:desc-spectrum} in terms of coordinates.
We let~$p_1,\ldots,p_N$ be the non-factorizable components and let $d(1), \ldots, d(N)$ be their respective degrees.

We assume for the whole subsection that $k$ is algebraically closed.
As explained in \Cref{subsubsn:affine-embed}, we embed the set $(\Spec\,R)(k)$ of $k$-points of $\Spec\,R$ into the affine space $k^N$.
We denote by $e_1, \ldots, e_N$ the basis elements of~$k^N$, each one corresponding to a non-factorizable component.

\begin{definition}
	\label{defn:point-eh}
	To each subgroup $H \in \Sub_{G,D}$, we associate the following point of $k^N$:
	\[
		e_H
		\eqdef
		\sum_{i \text{ such that } \gen{p_i} \subseteq H}
			e_i.
	\]
\end{definition}

Moreover, we introduce the notion of ``(strict) weighted span'' of a set of points:

\begin{definition}
	\label{defn:weighted-span}
	Let $\{x_1, \ldots, x_n\}$ be a finite set of points of $k^N$, decomposed in the standard basis:
	\[
		x_i
		=
		\sum_{j=1}^N
			\xi_{i,j}
			e_j.
	\]
	The \emph{weighted span} of $\{x_1, \ldots, x_n\}$ is the set:
	\[
		\left \{
			\sum_{j=1}^N
				\sum_{i=1}^n
					\lambda_i^{d(j)}
					\xi_{i,j}
					e_j
			\verti
				(\lambda_1, \ldots, \lambda_n) \in k^n
		\right \}.
	\]
	and the \emph{strict weighted span} of $\{x_1, \ldots, x_n\}$ is its weighted span, minus the weighted span of any proper subset, i.e.:
	\[
		\left \{
			\sum_{j=1}^N
				\sum_{i=1}^n
					\lambda_i^{d(j)}
					\xi_{i,j}
					e_j
			\verti
				(\lambda_1, \ldots, \lambda_n) \in (k^{\times})^n
		\right \}.
	\]
\end{definition}

\begin{example}
	When the degrees $d(i)$ are all equal, the weighted span of $\{x_1, \ldots, x_n\}$ is simply the linear subspace of $k^N$ spanned by $x_1, \ldots, x_n$.
	For a more interesting example, take $N=2, d(1)=1, d(2)=r$.
	The weighted span of the singleton $\{(1,1)\}$ is then the graph of $x \mapsto x^r$ in $k^2$.
\end{example}

We now use this terminology to rephrase \Cref{thm:desc-spectrum} more explicitly:

\begin{theorem}
	\label{thm:desc-spectrum-coords}
	Under the hypotheses of \Cref{thm:desc-spectrum}, the $k$-points of the strata $\gamma(H)$ admit the following description, for each $H \in \Sub_{G,D}$:
	\begin{itemize}
		\item
			The stratum $\gamma(1)$ contains a single $k$-point, namely the origin $(0,\ldots,0) \in k^N$.
		\item
			If $H$ is nontrivial, then we can write $H$ as the product of a free factor family $H_1,\ldots,H_k$ of non-splitters.
			Then, the set of $k$-points of $\gamma(H) \subseteq \Spec\,R$ is, as a subset of $k^N$, the strict weighted span of the points $e_{H_1}, \ldots, e_{H_k}$.
	\end{itemize}
\end{theorem}

\begin{proof}
	The case $H=1$ is clear.
	We consider the case of a nontrivial subgroup $H \in \Sub_{G,D}$.
	The hypotheses imply that $H$ admits a free factor family $H_1, \ldots, H_k$ of non-splitters.
	Let $p_{0,1}, \ldots, p_{0, N(0)}$ be the non-factorizable components whose group is not contained in $H$ and, for each $i \in \{1,\ldots,k\}$, let $p_{i,1}, \ldots, p_{i,N(i)}$ be the non-factorizable components whose group is contained in $H_i$.
	Since $H_1, \ldots, H_k$ is a factor family, there are no other non-factorizable components, and since it is a free family these lists do not overlap.
	We may assume that the $k$-points of $\Spec\,R$, seen as points of $k^N$ (where $N=N(0)+N(1)+\cdots+N(k)$), have their coordinates $x_{i,j}$ (corresponding respectively to the non-factorizable components $p_{i,j}$) ordered in the following way:
	\[
		(
			\underbrace{
				x_{0,1}, \, \ldots, \, x_{0,N(0)}
			}_{\substack{
				\text{non-factorizable components}\\
				\text{of group not contained in } H
			}}
			\, , \,
			\underbrace{
				x_{1,1}, \, \ldots, \, x_{1,N(1)}
			}_{\substack{
				\text{non-factorizable components}\\
				\text{of group contained in } H_1
			}}
			\, , \,
			\ldots
			\, , \,
			\underbrace{
				x_{k,1}, \, \ldots , \, x_{k,N(k)}
			}_{\substack{
				\text{non-factorizable components}\\
				\text{of group contained in } H_k
			}}
		).
	\]
	Note that, for $i \in \{1, \ldots, k\}$, the point $e_{H_i}$ from \Cref{defn:point-eh} has coordinates
	$
		x_{i',j}
		=
		\begin{cases}
			1 & \text{if } i=i' \\
			0 & \text{otherwise.}
		\end{cases}
	$

	The $k$-points of $\gamma(H)$ are, in particular, $k$-points of $V(J^*_H)$, and hence they satisfy $x_{0,1} = \cdots = x_{0,N(0)} = 0$.
	By \Cref{prop:desc-stratum-product}, the only additional condition that they satisfy is that, for each $i \in \{1, \ldots, k\}$, their projection on $k^{N(i)}$ via the morphism $\pi_{H_i}$, which is the point $(x_{i,1}, \ldots, x_{i,N(i)}) \in k^{N(i)}$, must be a $k$-point of $\tilde\gamma(H_i)$.

	Let $i \in \{1, \ldots, k\}$.
	By hypothesis, there is at most one component of group $H_i$ for each degree and each $i \in \{1, \ldots, k\}$.
	Let $d(i,j)$ denote the degree of the non-factorizable component $p_{i,j}$.
	By \iref{thm:curve-in-gamma}{thm:curve-in-gamma-v} and \iref{prop:gamma-and-tilde-gamma}{prop:gamma-and-tilde-gamma-ii}, the $k$-points of~$\tilde\gamma(H_i)$ are then exactly the points of $k^{N(i)}$ of the form $(\lambda_i^{d(i,1)},\ldots,\lambda_i^{d(i,N(i))})$ for some $\lambda_i \in k^\times$.

	Putting everything together, the $k$-points of $\gamma(H)$ are the points of the form:
	\[
		(
			\underbrace{
				0, \, \ldots, \, 0
			}_{N(0)}
			\, , \,
			\underbrace{
				\lambda_1^{d(1,1)}, \,
				\ldots, \,
				\lambda_1^{d(1,N(1))}
			}_{}
			\, , \,
			\ldots
			\, , \,
			\underbrace{
				\lambda_k^{d(k,1)}, \,
				\ldots, \,
				\lambda_k^{d(k,N(k))}
			}_{}
		)
		\quad
		\text{where }
		\lambda_1, \ldots, \lambda_k \in k^\times.
	\]
	The set of such points is exactly the strict weighted span of the points $e_{H_i}$, proving the claim.
\end{proof}

\subsection{An application: the case of symmetric groups}
\label{ssn:symgp}

We now give a concrete example where the variety of components can be described (and even drawn) using \Cref{thm:desc-spectrum-coords}.
In \cite[Chapter~6]{SegThese}, we have focused on the following situation: $G$ is the symmetric group $\Sym_d$ for some $d \geq 2$, the set $D$ is the singleton containing the conjugacy class $c$ of transpositions, and $\xi$ maps $c$ to $1$.
A careful study of the action of braid groups on tuples of transpositions \cite[Theorem~6.2.6]{SegThese} shows that the following properties hold:
\begin{itemize}
	\item
		The non-factorizable components of $\Comp_{\PC}(\Sym_d , \, \{c\} , \, 1)$ are the orbits of tuples $(\tau, \tau)$ where $\tau \in \Sym_d$ is a transposition.
		In particular, there are $d(d-1)/2$ non-factorizable components, all of which have degree $2$.
	\item
		The $D$-generated subgroups $H$ of $\Sym_d$ are all of the form $\Sym_{A_1} \times \cdots \Sym_{A_k}$ for some partition $A_1 \sqcup \ldots \sqcup A_k$ of $\{1,\ldots,d\}$.
		The factors $\Sym_{A_i}$ are shown to form a free factor family of $H$ using \Cref{prop:criterion-factor-family}.
	\item
		If $A$ is a subset of $\{1, \ldots, d\}$, then the subgroup $\Sym_A$ of $\Sym_d$ is a non-splitter, and for each even degree $n \geq 2\card{A}-2$ there is exactly one component of group $\Sym_A$ and of degree $n$.
\end{itemize}

In particular, the hypotheses of \Cref{thm:desc-spectrum}/\ref{thm:desc-spectrum-coords} are satisfied, which leads to a description of the variety of components.
We see it as embedded in the affine space $k^{\frac{d(d-1)}2}$, where we index the coordinates using pairs $(i,j)$ with $1 \leq i < j \leq d$.
If $A$ is a subset of $\{1, \ldots, d\}$, we define:
\[
	e_A
	\eqdef
	\sum_{\substack{
		1 \leq i < j \leq d \\
		i, j \in A
	}}
		e_{i,j}
\]
Then, \Cref{thm:desc-spectrum-coords} implies:

\begin{theorem}
	\label{thm:varcomp-symgp}
	The subset $(\Spec\,R)(k)$ of $k^{\frac{d(d-1)}2}$ is the union of the vector spaces $\Span_k(e_{A_1}, \ldots, e_{A_k})$ over (maximal) families $\{A_1, \ldots, A_k\}$ of disjoint subsets of $\{1, \ldots, d\}$.
\end{theorem}

For the details, we refer to \cite[Section~6.4]{SegThese}.
We now describe the set $V = (\Proj\,R)(k) \simeq (\Spec\,R)(k)/k^\times$ for small values of $d$:
\begin{itemize}
	\item[($d=3$)]
		The set $V$ embeds into $\mathbb{P}^2(k)$: it consists of four points, corresponding to the (lines spanned by the) points $e_{1,2}$, $e_{1,3}$, $e_{2,3}$ and $e_{1,2}+e_{1,3}+e_{2,3}$, i.e., to the subsets of $\{1,2,3\}$ of size $\geq 2$.
	\item[($d=4$)]
		The set $V$ embeds into $\mathbb{P}^5(k)$: it is the union of five points, corresponding to the points $e_A$ for subsets $A \subseteq \{1, \ldots, 4\}$ of size $\geq 3$, and of two lines, corresponding to the two pairs of disjoint subsets of size $2$.
	\item[($d=5$)]
		The set $V$ embeds into $\mathbb{P}^9(k)$: it is the union of six points (for the subsets $A \subseteq \{1, \ldots, 5\}$ of size $\geq 4$), and of ten lines (for the pairs consisting of a subset of size $3$ and its complement).
	\item[($d=6$)]
		The set $(\Proj\,R)(k)$ embeds into $\mathbb{P}^{14}(k)$: it is the (non-disjoint!) union of $22$ points (for the subsets of $\{1, \ldots, 6\}$ of size $\geq 4$), of $10$ lines (for the pairs of disjoint subsets of size $3$) and of $15$ planes (for the triples of disjoint subsets of size $2$).
\end{itemize}

\printbibliography
\end{document}